\definecolor{calpolypomonagreen}{rgb}{0.12, 0.3, 0.17}
	\definecolor{asparagus}{rgb}{0.53, 0.66, 0.42}
	\definecolor{applegreen}{rgb}{0.55, 0.71, 0.0}
		\definecolor{darkpastelgreen}{rgb}{0.20, 0.70, 0.24}
		\definecolor{amaranth}{rgb}{0.85, 0.17, 0.31}
			\definecolor{darkpastelred}{rgb}{0.76, 0.23, 0.13}
\newtheorem{theorem}{Theorem}[section]
\newtheorem{lemma}[theorem]{Lemma}
\theoremstyle{definition}
\newtheorem{definition}[theorem]{Definition}
\newtheorem*{definition*}{Definition}
\theoremstyle{definition}
\renewcommand{\leq}{\leqslant}
\renewcommand{\geq}{\geqslant}
\newcommand{\ca}{\mathcal}
\def\G{\mathrm{G}}
\def\G{\mathbf G}
\def\F{\mathbb F}
\def\Q{\mathbb Q}
\def\Zint{\mathbb Z}
\def\ca{\mathcal}
\def\t{\mathbf t}
\def\x{\mathbf x}
\def\n{\mathbf n}
\def\a{\mathbf a}
\def\c{\mathbf c}
\def\y{\mathbf y}
\def\m{\mathbf m}
\def\b{\mathbf b}
\def\g{\mathbf g}
\newcommand{\gen}[1]{\ensuremath{\langle #1\rangle}}
\def\blfootnote{\xdef\@thefnmark{}\@footnotetext}
\author[L. Bary-Soroker]{Lior Bary-Soroker}
\address{School of Mathematical Sciences, Tel Aviv University, Tel Aviv 69978, Israel }
\email{barylior@tauex.tau.ac.il}
\author[D. Garzoni]{Daniele Garzoni} 
\address{Department of Mathematics, University of Southern California, Los Angeles, CA 90089-2532, USA}
\email{garzoni@usc.edu}
\author[V. Matei]{Vlad Matei}
\address{Institute of Mathematics “Simion Stoilow" of the Romanian Academy, Calea Grivitei 21, Bucharest 010702, Romania}
\email{vlad.matei@imar.ro}
\begin{document}

\title{On the irreducibility of $f(2^n,3^m,X)$ and other such polynomials}

\maketitle

\begin{abstract}
	Let $f(t_1, \ldots, t_r, X)\in \Zint[t_1, \ldots, t_r,X]$ be irreducible and let $a_1, \ldots, a_r\in \Zint \smallsetminus \{0,\pm 1\}$. Under a necessary ramification assumption on $f$, and conditionally on the Generalized Riemann Hypothesis, we show that for almost all integers $n_1, \ldots, n_r$, the polynomial $f(a_1^{n_1}, \ldots, a_r^{n_r}, X)$ is irreducible in $\Q[X]$.
\end{abstract}

\section{Introduction and main result}
Hilbert's Irreducibility Theorem (HIT) is one of the central theorems in arithmetic geometry. In a quantitative form, it says that if $f(t_1, \ldots, t_r,X) \in \Zint[t_1,\dots, t_r,X]$ is an irreducible polynomial, then 
\begin{equation}\label{classicalHIT}
\frac{\#\{(n_1, \ldots, n_r)\in (\Zint\cap [-N,N])^r \mid f(n_1, \ldots, n_r,X)\in \Q[X] \text{ is irreducible} \}}{(2N+1)^r} \to 1,
\end{equation}
as $N \to \infty$ (see for example, \cite[Theorem 3.4.4]{Serre_Topics}). 
One may view $f$ as a cover of the algebraic group $G=\G_a^r$. 
In recent years, there is an extensive study of generalizations of the theorem to other algebraic groups $G$ \cite{corvaja2017zannier_hilbert,corvaja2022demeio_lombardo,bary2023ramified,bary2023garzoni}. 

A key observation of Zannier \cite{zannier2010} is to restrict to ramified covers, cf.\ \cite{corvaja2017zannier_hilbert}. For this Zannier introduces the so-called Pull Back (PB) condition. In our setting, the (PB) condition may be stated in polynomial terms, as follows.

We denote $r$-tuples with bold letters, e.g., $\t=(t_1,\ldots, t_r)$, and if $\n$ is an $r$-tuple of integers, we write  $\t^\n = (t_1^{n_1},\ldots, t_r^{n_r})$.
Let $f(\t,X)\in \Zint[\t,X]$ be a polynomial in $r+1$ variables and let $\a\in (\Zint\smallsetminus\{0,\pm 1\})^r$. 
We say that $f$ satisfies the (PB) condition, defined in \cite{zannier2010}, cf.\ \cite{debes1992irreducibility}, if: 
\begin{itemize}
    \item[(PB)] For every $\m\in (\Zint_{>0})^r$, the polynomial $f(\t^{\m}, X)$ is irreducible in $\overline{\Q}[\t,X]$, and $\deg_Xf\geq 1$.
\end{itemize}
In terms of ramification, (PB) is equivalent to $C=\{ f=0\}\subseteq \G_m^r\times \mathbb{A}^1$ being geometrically irreducible and the cover $\widetilde C\to \G_m^r$ having no unramified nontrivial subcovers, where $\widetilde C$ denotes the normalization of $C$. 

Denote by  
\begin{equation}
    \label{def:coun}
    \ca N(f,\a; N) = \#\{ \n\in ([-N,N]\cap \Zint)^r | f(\a^{\n},X)\in \Q[X] \text{ is irreducible} \} .
\end{equation}
In particular, \cite[Theorem~1]{zannier2010} implies that under (PB)
there is at least a positive density of $\n$ that keep irreducibility; that is to say,
\begin{equation}
	\label{eq:zannier}
    \liminf_{N\to \infty} 
\frac{\ca N(f,\a; N) }{(2N+1)^r} > 0.
\end{equation}
(In fact, \cite[Theorem~1]{zannier2010} is stated only for cyclic subgroups of $\G_m^r(\Q)$, which is the most difficult case; but his methods apply to our setting and give \eqref{eq:zannier}.)

Our main result shows that the density is $1$ under the Generalized Riemann Hypothesis (GRH).

\begin{theorem}\label{t:main}
    Let $f(\t,X)\in \ca \Zint[\t,X]$ satisfy \emph{(PB)} and let $\a\in ( \Zint \smallsetminus \{0,\pm 1\})^r$. Then, conditionally on GRH,
\[
    \lim_{N\to \infty}\frac{\ca N(f,\a; N)}{(2N+1)^r} = 1.
\]
\end{theorem}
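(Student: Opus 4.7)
The plan is to run a large-sieve argument over auxiliary primes. Let $L$ be the splitting field of $f(\t,X)$ over $\overline{\Q}(\t)$, with $G = \Gal(L/\Q(\t)) \leqslant S_n$ acting transitively on the $n = \deg_X f$ roots. For each $\n$ such that $\a^{\n}$ avoids the branch locus, one obtains $L_{\n}/\Q$ and a specialization embedding $\Gal(L_{\n}/\Q) \hookrightarrow G$ whose image I denote $G_{\n}$; the polynomial $f(\a^{\n}, X)$ is irreducible iff $G_{\n}$ is transitive on the roots. As there are finitely many maximal intransitive subgroups $H < G$ up to conjugation, it suffices to prove, for each such $H$,
\[
    \#\bigl\{\n \in ([-N,N] \cap \Zint)^r : G_{\n} \text{ is $G$-conjugate into } H\bigr\} = o\bigl((2N+1)^r\bigr).
\]

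Fix $H$. I will pick a family $\ca P = \ca P(N)$ of $K = K(N)$ auxiliary primes (of size roughly $(\log N)^A$, of good reduction for $L/\Q(\t)$). For any $p \in \ca P$ unramified in $L_{\n}$, the Frobenius conjugacy class $\op{Frob}_{p, \n} \subseteq G$ is a function of the reduction $\a^{\n} \bmod p$, and the condition ``$G_{\n}$ is conjugate into $H$'' forces $\op{Frob}_{p, \n} \cap H \neq \emptyset$ for every such $p$. I will bound the count of $\n$ satisfying this simultaneous containment by a sieve: one expects it is at most
\[
    \bigl(\tfrac{|H|}{|G|} + o(1)\bigr)^K \cdot (2N+1)^r = o\bigl((2N+1)^r\bigr),
\]
provided $K \to \infty$. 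This rests on two Chebotarev inputs. \emph{First}, for fixed $p$, as $\n$ varies, $\op{Frob}_{p, \n}$ equidistributes in $G$ with error $O(p^{r-1/2})$---this is Chebotarev/Lang--Weil for the reduced cover $L_{\F_p}/\F_p(\t)$, whose geometric Galois group remains $G$ by (PB) (applied to pullbacks $f(\t^{\m},X)$, which also handles the restriction to the subgroup $\langle \a \bmod p\rangle \leqslant (\F_p^{\ast})^r$ generated by the reductions). \emph{Second}, for distinct primes in $\ca P$, the Frobenius events are asymptotically independent; this I intend to obtain from GRH-effective Chebotarev (Lagarias--Odlyzko) applied to the compositum of the reduced covers, whose Galois group is essentially $G^K$ by the no-unramified-subcover part of (PB).

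The step I expect to be the main obstacle is the second, joint equidistribution: the GRH error $O(\sqrt{x} \log(\op{disc} \cdot x))$ of Chebotarev for the compositum must be beaten by the sieve saving $(|H|/|G|)^K$, while the discriminant of the compositum grows polynomially in $K \log P$. Balancing $K$, the prime range $P$, and the sample size $N$ is delicate and forces $K$ to grow only slowly in $N$. A secondary technical issue is excluding primes $p$ with anomalously small orders $\op{ord}_p(a_i)$, for which equidistribution on $\langle \a\bmod p\rangle$ does not yield the full group $G$; these form a sparse set, controlled by Hooley-type bounds conditional on GRH (in the spirit of Artin's primitive root conjecture).
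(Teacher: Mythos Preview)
Your single-prime step is on target and matches the paper's Lemma~4.1. The gap is in the independence step. The event at $p$ depends on $\a^{\n}\bmod p$, hence only on $\n\bmod(p-1)$; for distinct odd primes $p,q$ one always has $\gcd(p-1,q-1)>1$, so $\n\bmod(p-1)$ and $\n\bmod(q-1)$ (with $\n$ uniform in a box) are genuinely dependent. This correlation is purely combinatorial, coming from the CRT structure of the moduli $p_i-1$; no Chebotarev theorem on any ``compositum of the reduced covers'' addresses it (those covers live over different fields $\F_{p_i}$ and have no common compositum in any useful sense), and Lagarias--Odlyzko is simply not the relevant tool. Contrast the classical HIT sieve, where one reduces $\n$ itself modulo the $p_i$, which \emph{are} pairwise coprime; here the relevant moduli are the $p_i-1$, which never are. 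Your proposal does not contain a mechanism to decouple these.

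The paper's resolution is of a different nature. GRH enters only via Erd\H{o}s--Murty/J\"arviniemi, to guarantee a positive-density set $\ca P_{f,\ell}(\a)$ of primes with $\op{ord}_{\mathfrak p}(a_i)\ge(p-1)/\ell$. One then applies Tur\'an's theorem to the graph on primes of $\ca P_{f,\ell}(\a)$ with norm in $(x,2x]$, joining $\mathfrak p,\mathfrak q$ iff $\gcd(p-1,q-1)\le z$; a Siegel--Walfisz count shows few non-edges, so the graph contains a clique of size $\gg z$ (Lemma~3.5). With $\mathfrak p_1,\dots,\mathfrak p_t$ in hand whose $p_i-1$ have pairwise gcd $\le z$, approximate independence follows by an elementary conditional-probability argument (Lemma~4.2): conditioning on $\n$ modulo $\op{lcm}(p_1{-}1,\dots,p_{u-1}{-}1)$ fixes $\n$ only modulo some $g\le z^{u-1}$, and applying (PB) and Lemma~\ref{l_field_scalars} to the twisted polynomial $f(\a^{\c}\,\t^{g},X)$ recovers the single-prime bound for the conditional event. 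This graph-theoretic/combinatorial step is the missing idea in your plan.
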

The (PB) condition is necessary, as the polynomial $f(t,X) = X^2-t$ exemplifies. It does not satisfy (PB) and for, say, $a=2$, we have $\{n: f(2^n,X) \mbox{ is irreducible}\} =\{ n\equiv 1\mod 2\}$, hence the density is $1/2$. By considering $f(t,X)=X^2-2t$, one also notes that, in the definition of (PB), irreducibility in $\overline \Q[\t,X]$ cannot be relaxed to irreducibility in $\Q[\t,X]$.

For curves, that is when $r=1$, D\`ebes \cite{debes1992irreducibility} proves the  theorem unconditionally, obtaining more strongly the irreducibility of $f(a^n,X)$ for all but finitely many $n\in \Zint$. D\`ebes applies Siegel's theorem on integral points on curves (specifically, for the ramified sub-covers of the cover of $\G_m$ given by $f$). Since Siegel's theorem is restricted to curves, it seems that this approach cannot be applied in higher dimensions.

\subsection*{Acknowledgment}
The authors thank Noam Kimmel for a few helpful discussions. 

The first author was supported by the Israel Science Foundation (grant no. 702/19). The third author was supported by the project “Group schemes, root systems, and
related representations” founded by the European Union - NextGenerationEU through
Romania’s National Recovery and Resilience Plan (PNRR) call no. PNRR-III-C9-2023-
I8, Project CF159/31.07.2023, and coordinated by the Ministry of Research, Innovation
and Digitalization (MCID) of Romania.
\section{Method of Proof}
\label{sec:method}
We first prove the following theorem on rational points for general number fields: Let $K$ be a number field with ring of integers $\ca O_K$. For $f(\t,X)\in \ca O_K[\t,X]$ and $\a\in (\ca O_K)^r$ with $a_i$ nonzero and not roots of unity, we define
\begin{equation}
 \label{def:noroots}
    \ca N^0_K(f,\a; N) = \#\{ \n\in ([-N,N]\cap \Zint)^r | f(\a^\n,X) \text{ has no root in $K$} \} .
\end{equation}
The (PB) condition trivially generalizes to number fields, see Definition~\ref{def:PB}. 

\begin{theorem}
\label{t:main_no_roots}
    Assume the setting above and that $f$ satisfies (PB). Then, conditionally on GRH,  
    \[
       \lim_{N\to \infty} \frac{\ca N^0_K(f,\a; N)}{(2N+1)^r} = 1.
    \]
\end{theorem}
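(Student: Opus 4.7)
Let $B_N = \{\n\in[-N,N]^r\cap\Zint^r : f(\a^\n,X)\text{ has a root in }K\}$, so the goal is $|B_N|/(2N+1)^r\to 0$. If $\n\in B_N$ then, for every prime $\mathfrak{p}$ of $\ca O_K$ of good reduction for $f$ at which each $a_i$ is a unit, the reduction $f(\a^\n,X)\bmod\mathfrak{p}$ has a root in $\F_\mathfrak{p}$. The plan is a multiplicative large sieve: for a carefully chosen finite set of primes $\mathfrak{p}_1,\dots,\mathfrak{p}_k$ I will show
\[
\frac{|B_N|}{(2N+1)^r} \leq \frac{\#\{\n\in[-N,N]^r : f(\a^\n,X)\text{ has a root mod }\mathfrak{p}_j\text{ for all }j\}}{(2N+1)^r} \leq (1-\delta)^k + o_N(1)
\]
for some $\delta>0$ independent of $k$, and then let $N\to\infty$ followed by $k\to\infty$.

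\textbf{Step 1 (local non-root density).} Fix a prime $\mathfrak{p}$ of good reduction at which the $a_i$ are units. By (PB), $\wt C\to \G_m^r$ is geometrically irreducible with no nontrivial unramified subcovers, and a Jordan--Cameron-type permutation-group argument (as used in \cite{zannier2010}) shows that the geometric Galois group of $\wt C/\G_m^r$ contains an element acting fixed-point-freely on the fiber of $f$. Applying effective Chebotarev over $\F_\mathfrak{p}$ via Lang--Weil, the proportion of $\c\in\G_m^r(\F_\mathfrak{p})$ for which $f(\c,X)$ has \emph{no} root in $\F_\mathfrak{p}$ is at least some $\delta>0$, uniformly for $\mathfrak{p}$ of sufficiently large residue characteristic.

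\textbf{Step 2 (transfer to the exponential family).} The image $H_\mathfrak{p}\subseteq\G_m^r(\F_\mathfrak{p})$ of $\n\mapsto\a^\n$ is a finite subgroup. Under GRH, a multi-variable version of Hooley's theorem on Artin's conjecture (generalized to number fields by Cooke--Weinberger and Lenstra) gives $[\G_m^r(\F_\mathfrak{p}):H_\mathfrak{p}]$ bounded by an absolute constant for a density $1-o(1)$ of primes $\mathfrak{p}$; when the $a_i$ are not multiplicatively independent one first passes to the Zariski closure of $\langle\a\rangle$, a subtorus to which (PB) still applies. For such $\mathfrak{p}$ of norm bounded by a small power of $N$, the map $\n\mapsto\a^\n$ is nearly equidistributed on $H_\mathfrak{p}$ by a Weyl-type estimate, and Step~1 restricts to
\[
\frac{\#\{\n\in[-N,N]^r : f(\a^\n,X)\text{ has a root mod }\mathfrak{p}\}}{(2N+1)^r} \leq 1-\delta' + o_N(1)
\]
for some $\delta'>0$ independent of $\mathfrak{p}$.

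\textbf{Step 3 (joint sieve; the main obstacle).} Finally, one chooses $k$ admissible primes $\mathfrak{p}_1,\dots,\mathfrak{p}_k$ as in Step~2. By CRT together with effective joint Chebotarev on the compositum of $\wt C$ with the Kummer-type arithmetic extensions $K(\zeta_{d_j},\a^{1/d_j})$ controlling the reductions of $\a$ at each $\mathfrak{p}_j$, the events ``$f(\a^\n,X)$ has a root mod $\mathfrak{p}_j$'' become approximately independent as $\n$ ranges over $[-N,N]^r$, yielding the claimed $(1-\delta')^k+o_{N,k}(1)$ bound. The main obstacle is precisely this joint-independence step: one must quantify the decorrelation between Frobenius conditions at distinct primes, requiring the GRH-effective Chebotarev bound applied simultaneously on the geometric cover $\wt C$ and on the arithmetic extensions controlling the orders of $\a$ modulo each $\mathfrak{p}_j$, with error terms summable as both the $\mathfrak{p}_j$ and $k$ grow with $N$. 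A secondary technical point is the uniformity in $\mathfrak{p}$ of the constants $\delta,\delta'$, handled via a quantitative form of the Jordan--Cameron input together with the GRH error bounds.
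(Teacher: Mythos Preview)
Your overall sieve strategy matches the paper's, and Steps~1--2 are broadly on the right track (the paper also uses that, under GRH, the index of $\langle a_i\rangle$ in $\F_p^\times$ is bounded by a fixed $\ell$ for primes of density close to~1, and then exploits (PB) to replace $f$ by $f(\t^{\m},X)$ and recover equidistribution on the whole of $\G_m^r(\F_p)$). However, Step~3 contains a genuine gap that is exactly the crux of the paper.

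The decorrelation you invoke via ``CRT together with effective joint Chebotarev'' does not work as stated: the value of $\a^{\n}\bmod\mathfrak p_j$ depends on $\n\bmod (p_j-1)$, and for any two odd primes one has $(p_i-1,p_j-1)\geq 2$, so the residues of $\n$ modulo the various $p_j-1$ are \emph{never} independent. No amount of Chebotarev on Kummer extensions $K(\zeta_{d_j},\a^{1/d_j})$ resolves this, because those extensions govern the \emph{choice} of good primes $\mathfrak p_j$, not the joint distribution of $\n\mapsto \a^{\n}\bmod\mathfrak p_j$ over the box $[-N,N]^r$ once the primes are fixed. Your ``approximately independent'' is therefore unjustified, and this is precisely the obstacle the paper isolates in its Section~2 discussion.

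The paper's fix has two ingredients you are missing. First, a combinatorial selection: one builds a graph on the primes in $\ca P_{f,\ell}(\a)$ with norm in $(x,2x]$, joining $\mathfrak p,\mathfrak q$ when $(p-1,q-1)\leq z$, and applies Tur\'an's theorem (together with Siegel--Walfisz to count edges) to extract $t\gg z$ primes with pairwise gcd $\leq z$. Second, even with small gcds the events are not independent; the paper handles this by conditioning on $\n\bmod M_u=[p_1-1,\dots,p_{u-1}-1]$, writing $\n=\c+g\x$ with $g=(p_u-1,M_u)\leq z^u$, and observing that the resulting shifted, powered polynomial $\tilde f(\t,X)=f(a_1^{c_1}t_1^{g},\dots,a_r^{c_r}t_r^{g},X)$ \emph{again satisfies (PB)} with the same field of constants $L_{\tilde f}=L_f$ (this uses Lemma~\ref{l_field_scalars}). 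Hence the single-prime bound applies to $\tilde f$ uniformly in $\c$, giving the conditional estimate and thus the product bound $(1-1/d)^t$. Without both the Tur\'an selection and this iterated use of (PB) for the shifted polynomials, your Step~3 does not go through.
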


On the one hand, it is obvious that $\ca N^0_{\Q}(f,\a; N) \geq \mathcal{N}(f,\a; N)$, when $K=\Q$. However, this is not helpful. The key point is that given $f\in \Zint[\t,X]$ satisfying (PB), there is a number field $K$ and polynomials $f_i$ over $K$ satisfying (PB) and $\b\in (\ca O_K)^r$ with $b_i$ nonzero and not roots of unity, such that Theorem \ref{t:main_no_roots} for $\ca N^0_{K}(f_i,\b; N)$ implies Theorem \ref{t:main} for $\mathcal{N}(f,\a; N)$. So in other words the proof of the theorem over $\mathbb{Q}$ necessitates considering general number fields.

 We skip the details, as the deduction Theorem~\ref{t:main} from Theorem~\ref{t:main_no_roots}  is standard, and is essentially the same as the deduction of \cite[Theorem~1]{zannier2010} from \cite[Corollary]{zannier2010}. 

This approach automatically gives the generalization of Theorem~\ref{t:main} to number fields. To state the result,  we introduce the notation  $\mathcal{N}_K$, that is the obvious generalization of $\mathcal{N}$, when we replace $\mathbb{Q}$ and $\mathbb{Z}$ by $K$ and $\ca O_K$, respectively. 
\begin{theorem}
\label{t:main_number_fields}
Let $K$ be a number field, let $f(\t,X)\in \ca O_K[\t,X]$ satisfy \emph{(PB)} and let $\a\in (\ca O_K)^r$ be such that $a_i$ are nonzero and not roots of unity. Then, conditionally on GRH, 
\[
\lim_{N\to \infty}
\frac{\mathcal{N}_K(f,\a;N)}{(2N+1)^r} =1.
\]
\end{theorem}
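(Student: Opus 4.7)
The plan is to reduce Theorem~\ref{t:main_number_fields} to Theorem~\ref{t:main_no_roots} by a Galois-theoretic factorization argument, mirroring the reduction the authors outline for the case $K = \Q$; the availability of Theorem~\ref{t:main_no_roots} over arbitrary number fields is precisely what makes this reduction work for general $K$. Write $d = \deg_X f$. If $f(\a^\n, X)$ is reducible over $K$, then it admits an irreducible factor of some degree $e \in \{1, \ldots, d-1\}$, cut out by a $\Gal(\overline K/K)$-stable $e$-subset of its roots. There are only finitely many combinatorial types of such subsets, indexed by Galois orbits on $\binom{[d]}{e}$, so it suffices to bound the density for each type separately.

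To make this concrete, let $L$ be a splitting field of $f(\t, X)$ over $K(\t)$ and set $G = \Gal(L/K(\t))$, acting transitively on the roots $\beta_1, \ldots, \beta_d$. For each $e$ and each $G$-orbit on $\binom{[d]}{e}$, fix a representative $S$, let $H_S = \mathrm{Stab}_G(S)$, and take a primitive element $\gamma_S$ for $L^{H_S}/K(\t)$ that can be written as a linear combination of $\{\beta_i : i \in S\}$. Its minimal polynomial, cleared of denominators, yields a resolvent $f_S(\t, Y) \in \ca O_K[\t, Y]$. A standard Galois argument shows that $f(\a^\n, X)$ has an irreducible factor of the shape prescribed by this orbit if and only if $f_S(\a^\n, Y)$ has a root in $K$. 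Since $L^{H_S}$ may contain a nontrivial extension $K'_S$ of $K$, the polynomial $f_S$ need not be geometrically irreducible; after base change to $K'_S$, it decomposes into geometrically irreducible factors $f_{S,j} \in \ca O_{K'_S}[\t, Y]$, each controlling a portion of the factorization behavior.

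The main work — and the step I expect to be the main obstacle — is to verify that each geometrically irreducible component $f_{S,j}$ inherits the (PB) condition over $K'_S$, with respect to a tuple $\b \in \ca O_{K'_S}^r$ obtained from $\a$ (so that each $b_i$ is again nonzero and not a root of unity). Geometric irreducibility holds by construction; the no-unramified-subcover condition for $f_{S,j}(\t^\m, Y)$ must be extracted from the corresponding condition for $f(\t^\m, X)$ via monodromy: (PB) for $f$ says that the geometric monodromy group $G$ is preserved under pullback along $\t \mapsto \t^\m$, and the intermediate cover cut out by $H_S \leq G$ inherits this preservation directly. Once (PB) is established for each $f_{S,j}$, Theorem~\ref{t:main_no_roots} applied over $K'_S$ yields that the density of $\n$ for which $f_{S,j}(\b^\n, Y)$ has a root in $K'_S$ is zero; summing over the finitely many $e$, $S$, and $j$ bounds the density of reducible $\n$ by zero, giving Theorem~\ref{t:main_number_fields}.
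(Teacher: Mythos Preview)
Your overall strategy --- reduce irreducibility to the no-roots statement via Galois resolvents, then apply Theorem~\ref{t:main_no_roots} over suitable extensions --- is exactly the standard reduction the paper invokes (the deduction of \cite[Theorem~1]{zannier2010} from \cite[Corollary]{zannier2010}). But there is a genuine gap at precisely the step you flag as the main obstacle. You assert that ``(PB) for $f$ says that the geometric monodromy group $G$ is preserved under pullback along $\t \mapsto \t^\m$''; this is false. (PB) only says that $f_\m$ stays \emph{irreducible}, i.e.\ that the monodromy $G_\m \subseteq G^{\mathrm{geo}}$ of the pullback remains \emph{transitive} on the $d$ roots --- it need not equal $G^{\mathrm{geo}}$. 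In group-theoretic terms, if $N_0 \trianglelefteq G^{\mathrm{geo}}$ is the subgroup generated by all geometric inertia, then (PB) for $f$ is the condition $G_1 N_0 = G^{\mathrm{geo}}$ (no unramified subcover of $V$), whereas (PB) for your resolvent $f_S$ is the condition $H_S N_0 = G^{\mathrm{geo}}$; the first does not imply the second. Concretely: take $G^{\mathrm{geo}} = D_4$ acting on the vertices $\{1,2,3,4\}$ of a square, with $N_0 = V_4 = \{e,(12)(34),(13)(24),(14)(23)\}$. Then $G_1 = \langle (24)\rangle$ satisfies $G_1 N_0 = D_4$, so (PB) holds for $f$; but for $S=\{1,2\}$ one finds $H_S=\langle(12)(34)\rangle\subseteq V_4$, so $H_S N_0 = V_4 \neq D_4$ and $f_S$ fails (PB).

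The repair --- and the real reason a new tuple $\b$ enters, which the paper signals when it writes ``polynomials $f_i$ over $K$ satisfying (PB) and $\b\in(\ca O_K)^r$'' --- is to factor each resolvent cover $W/H_S \to \G_m^r$ through its maximal unramified subcover. Over a finite extension $K'$ this subcover becomes an isogeny $\phi\colon \G_m^r\to\G_m^r$, and the residual cover $W/H_S \to \G_m^r$ (to the source of $\phi$) now has no unramified subcovers, hence does satisfy (PB); call its defining polynomial $h_S$. Enlarging $K'$ so that $\a=\phi(\b)$ for some $\b\in(\ca O_{K'})^r$, a $K$-root of $g_S(\a^\n,Y)$ forces a $K'$-root of one of the finitely many translates $h_{S,\zeta}(\t,Y):=h_S(\zeta\cdot\t,Y)$, $\zeta\in\ker\phi$, evaluated at $\b^\n$; each $h_{S,\zeta}$ again satisfies (PB), and Theorem~\ref{t:main_no_roots} applies. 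Your sketch introduces $\b$ but never identifies this isogeny-factorization step that produces it, and the monodromy-preservation argument you offer in its place does not hold.
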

Again, the deduction of Theorem~\ref{t:main_number_fields} from Theorem~\ref{t:main_no_roots} is standard, and we omit it.

Now we discuss the proof of Theorem~\ref{t:main_no_roots}. We would like to apply a standard reduction-modulo-primes method. 
The first step is to use that for a set of primes $p$ of density $1$, there is equidistribution modulo $p$, and hence one may bound the density of the complement of $\ca N^0$ modulo $p$
by $c<1$, using Chebotarev's theorem. The second step uses that reduction modulo several primes $p_1,\ldots, p_m$ is asymptotically independent, hence the density of the complement of $\ca N_0$ can be bounded by approximately $c^m$, which is very small if $m$ is large.

In our case, the first step necessitates that $\a^\n$ equidistributes in $\G_m^r(\F_p)$ for a set of primes $p$ of density $1$.  This is too much to expect to hold:
When $r=1$, $a^n$ equidistributes if and only if $a$ is a primitive root modulo $p$. It is open whether there are infinitely many such primes. Artin's primitive root conjecture (which is known to follow from GRH) predicts a positive density of primes for which $a$  is a primitive root (for $a\neq 0,\pm 1, \square$) and that this density is $<1$. So even Artin's conjecture is not sufficient.

For the second step, one needs that the events modulo different primes are asymptotically independent.  In our setting, the values $\a^\n \mod p$ and $\a^\n\mod q$ depend on $\n \mod p-1$ and $\n\mod q-1$, respectively. In the classical case, $p,q$ are coprime, but here $p-1,q-1$ are never coprime. To summarize, the following two points prevent the direct application of the classical method:
\begin{enumerate}
    \item The density of primes for which $\a^n$ equidistributes modulo $p$ is $<1$.
    \item For odd primes $p$ and $q$ we have $(p-1,q-1)>1$, so $\a^\n\mod p$ and $\a^\n\mod q$ are not independent.
\end{enumerate}

To overcome these problems, we modify the classical reduction-modulo-primes approach, so that it will be more flexible. For the first problem, we use the (PB) condition to relax the demand that $a_i^{n_i}$ are equidistributed in $\G_m(\F_p)$: it is sufficient that they are equidistributed in a large subgroup, see Lemma~\ref{l_one_prime}. 

To obtain equidistribution in a large subgroup for a large set of primes we use the following results: 
Let $K$ be a number field  and let $a\in \ca O_K$ be nonzero and not a root of unity. For $\ell>0$, let $d_\ell$ be the lower density of  the set of
primes $\mathfrak p$ of $K$ such that $\mathrm N_{K/\Q}(\mathfrak p)=p$ is prime and the order of $a$ modulo $\mathfrak p$ is at least $(p-1)/\ell$. In this setting, Erd\H{o}s-Murty 
\cite{erdos_murty} for $\Q$ and J\"arviniemi \cite{jarviniemi2021orders} for general number fields,  prove that GRH implies 
\begin{equation}
\label{l_GRH}
    \lim_{\ell\to \infty} d_\ell = 1,
\end{equation}
cf.\ \cite{Hooley}.

For the the second problem, we show that, in order to get asymptotic independence, it suffices to have many primes as above with the property that $(p-1,q-1)$ is  small for $p\neq q$, see Lemma~\ref{l_many_primes}. 
To estimate the number of such primes, we apply Tur\'an's theorem \cite{Turan}: Let $V$ be a finite simple undirected graph with $n$ vertices. If the number of edges of $V$ is at least $\delta n^2/2$, then $V$ contains a complete subgraph $K_r$ with 
\begin{equation}
    \label{l:turan}
    r\geq \frac{1}{1-\delta}.
\end{equation} 
The vertices of the graph will be primes $\mathfrak p$ from a specific set $\ca P_{f,\ell}(\a)$ of positive density (defined in \eqref{eq_pfell}) and with norm $p\in (x,2x]$, and we connect two primes $\mathfrak p,\mathfrak q$ iff $(p-1,q-1)$ is small; see Lemma~\ref{l:graph_theory} for details.

\section*{Notation List}
\begin{list}{}{\settowidth{\labelwidth}{00.00.0000.0}
    \setlength{\leftmargin}{\labelwidth}
    \addtolength{\leftmargin}{\labelsep}
    \renewcommand{\makelabel}[1]{#1\hfil}}
\item[$\bf a$] $(a_1,\ldots, a_r)\in \ca O_K^r$ such that each $a_i$ is nonzero and not a root of unity. 
\item[$\bf a^{\bf n}$, ${\bf a}^n$]  $(a_1^{n_1},\ldots, a_{r}^{n_r})$, $(a_1^n,\ldots, a_r^n)$, respectively.
 \item[$(a,b)$] the greatest common divisor of $a,b\in \Zint$.
\item[$\ca A_{\mathfrak p, M}$] the set of $\n\in (\Zint /M\Zint)^r$ such that $f(\a^\n,X)$ has a root modulo $\mathfrak p$ and $g_d(\a^\n)\neq 0$ modulo $\mathfrak p$, $\mathfrak p\in \ca P_f$ and $N_{K/\Q}(\mathfrak p)-1\mid M$ see \eqref{eq:Anp}.
\item[$d$] $\deg_X(f)$.
\item[$f({\bf t},X)$] a polynomial in $\ca O_K[{\bf t},X]$ satisfying (PB).
\item[$f_{\m}(\t,X)$] the polynomial $f(\t^\m,X)$.
\item[$f\gg_{a,b,\ldots} g$] $\exists C=C(a,b,\ldots)>0$ such that $|f(x)|\geq C|g(x)|$.  
\item[$f=o(g)$] $\displaystyle\lim_{x\to \infty} \frac{f(x)}{g(x)}=0$.
\item[$g_{d}({\bf t})$] the coefficient of $X^d$ in $f$.
\item[$\G_m$] the multiplicative group.
\item[$\ca O_K$] the ring of integers of $K$.
\item[$K$] a number field.
\item[$L_g$]
the algebraic closure of $K$ inside the Galois closure of $g\in \ca O_K[{\bf t},X]$ over $K({\bf t})$.
\item[$\overline{K}$] the algebraic closure of $K$.
\item[$\bf m$, $\bf n$] $(m_1,\ldots, m_r), (n_1,\ldots,n_r) \in \Zint^r$.
\item[{$[\m]$, $[m]$}]
the isogenies $\G_m^r\to \G_m^r$  given by ${\bf g}\mapsto {\bf g}^{\bf m}$, ${\bf g}\mapsto {\bf g}^{ m}$, respectively.
 \item[$\ca N(f,{\bf a}; N)$] the number of ${\bf n}$ with $|n_i|\leq N$ such that $f({\bf a},X)$ is irreducible over $\mathbb{Q}$, see \eqref{def:coun}.
 \item[$\ca N_K(f,{\bf a}; N)$] the number of $\bf n$ with $|n_i|\leq N$ such that $f({\bf a},X)$ is irreducible over $K$.
 \item[$\ca N^0_K(f,{\bf a}; N)$] the number of $\bf n$ with $|n_i|\leq N$ such that $f({\bf a},X)$ has no root in $K$, see \eqref{def:noroots}.
 \item[$\mathrm{N}_{K/\Q}(\mathfrak p)$] $|\ca O_K/\mathfrak p|$, the absolute norm of $\mathfrak p$.
  \item[$\mathfrak p$] a prime ideal of $\ca O_K$.
  \item[$\ca P_f$] the set of primes of $\ca O_K$ defined in \eqref{eq_p}.
  \item[$\ca P_{f,\ell}({\bf a})$] the subset of primes of $\ca P_f$ defined in \eqref{eq_pfell}.
 \item[$\bf t$]  $(t_1,\ldots, t_r)$, an $r$-tuple of independent variables. 
 \item[$Z_{C,N,\mathfrak p}$] $\{\n \in (\Zint \cap [-N,N])^r \mid \a^\n\in C(\F_p)\}$, where $C$ is a proper Zariski closed in $\G_m^r$.
 \item[$\delta(\ca Q)$] the density  
 $\displaystyle\lim_{x\to \infty}
 \frac{
 \#\{\mathfrak p\in \ca Q \mid x< \mathrm{N}_{K/\Q}(\mathfrak p)\leq 2x\}
 }{x/\log x}$ of a set of primes $\ca Q$ of $K$.
 \item[$\underline{\delta}(\ca Q)$] the lower density  
 $ \displaystyle\liminf_{x\to \infty}
 \frac{
 \#\{\mathfrak p\in \ca Q \mid  x<\mathrm{N}_{K/\Q}(\mathfrak p)\leq 2x\}
 }{x/\log x}$ of a set of primes $\ca Q$ of $K$.
\end{list}

\section{Preliminary lemmas}
\label{sec:preliminaries}

Let $K$ be a number field with a ring of integers $\ca O_K$.
\begin{definition} \label{def:PB}
A polynomial $f(\t,X)\in \ca O_K[\t,X]$ satisfies (PB) if $d:=\deg_X(f)\geq 1$, and for every $\m\in (\Zint_{>0})^r$, the polynomial $f_{\m}(\t,X):=f({\bf t}^{\bf m}, X)$ is irreducible in $\overline{K}[{\bf t},X]$.    
\end{definition}
For the rest of the section, fix $f(\t,X)\in \ca O_K[\t,X]$ satisfying (PB), let $d=\deg_X(f)$ and $g_d(\t)$ the coefficient of $X^d$.
For a polynomial $g(\t,X)\in\ca O_{K}[\t,X]$, let us denote by $L_g$ the algebraic closure of $K$ in the Galois closure of $f$ over $K(\t)$. 

\begin{lemma}
\label{l_field_scalars}
For every  $\m \in (\Zint_{>0})^r$,  $L_{f_{\m}}=L_f$.
\end{lemma}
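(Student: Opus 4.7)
I would prove $L_f \subseteq L_{f_\m}$ first: writing $\Omega_f$ and $\Omega_{f_\m}$ for the Galois closures of $f(\t,X)$ and $f_\m(\t,X)=f(\t^\m,X)$ over $K(\t)$, the substitution $\t \mapsto \t^\m$ identifies $\Omega_f$ with a subfield of $\Omega_{f_\m}$ (it sends each root of $f(\t,X)$ to a root of $f(\t^\m,X)$), and $L_f \subseteq \overline{K}$ then gives $L_f \subseteq L_{f_\m}$. For the reverse, I would reduce to the case $L_f = K$: viewing $f$ as a polynomial in $L_f[\t,X]$ preserves (PB) (since $\overline{L_f}=\overline{K}$), leaves $\Omega_f,\Omega_{f_\m}$ unchanged as abstract fields, and by idempotency of taking algebraic closures leaves $L_{f_\m}$ unchanged. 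So it suffices to prove: if $L_f = K$, then $L_{f_\m} = K$.

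Assuming $L_f = K$, set $\mathbf{u} := \t^\m$ and $K(\mathbf{u}) := K(t_1^{m_1},\ldots,t_r^{m_r}) \subseteq K(\t)$, so $\Omega_f$ becomes (up to the natural renaming) the Galois closure of $f(\mathbf{u},X)$ over $K(\mathbf{u})$. Under the hypothesis $L_f = K$ this extension is $K$-regular, so $\Omega_f^{\mathrm{geom}} := \Omega_f \cdot \overline{K}$ is a field, Galois over $\overline{K}(\mathbf{u})$ with the same Galois group $G$ as $\Omega_f/K(\mathbf{u})$. One then has $\Omega_{f_\m} = \Omega_f \cdot K(\t)$ and $\Omega_{f_\m}^{\mathrm{geom}} = \Omega_f^{\mathrm{geom}} \cdot \overline{K}(\t)$ as compositums inside a fixed algebraic closure of $K(\t)$. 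The standard compositum formula for Galois extensions gives $|\Gal(\Omega_{f_\m}/K(\t))| = |G|/[B:K(\mathbf{u})]$ with $B := \Omega_f \cap K(\t)$, and $|\Gal(\Omega_{f_\m}^{\mathrm{geom}}/\overline{K}(\t))| = |G|/[A:\overline{K}(\mathbf{u})]$ with $A := \Omega_f^{\mathrm{geom}} \cap \overline{K}(\t)$. Since the constant field index $[L_{f_\m}:K]$ equals the ratio of these two group orders, the claim reduces to the degree equality $[A:\overline{K}(\mathbf{u})] = [B:K(\mathbf{u})]$.

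To establish this equality, I would use the Galois theory of $\overline{K}(\t)/K(\mathbf{u})$, which is Galois with group $\mu_\m \rtimes \Gal(\overline{K}/K)$: the normal factor $\mu_\m = \prod_i \mu_{m_i}$ acts via $t_i \mapsto \zeta_i t_i$ and fixes $\overline{K}$, while $\Gal(\overline{K}/K)$ fixes the $t_i$. Writing $A = \overline{K}(\t)^H$ for a subgroup $H \leq \mu_\m$, one has $[A:\overline{K}(\mathbf{u})] = [\mu_\m:H]$. The crucial step is that $H$ is invariant under the natural action of $\Gal(\overline{K}/K)$ on $\mu_\m$: since $\Omega_f/K(\mathbf{u})$ is Galois, $\Omega_f$ is preserved by every element of $\Gal(\overline{K(\mathbf{u})}/K(\mathbf{u}))$, in particular by the extension of any $\sigma \in \Gal(\overline{K}/K)$ that fixes $\mathbf{u}$; hence $\Omega_f^{\mathrm{geom}}$ and $A$ are likewise preserved, which translates into $H$ being $\Gal(\overline{K}/K)$-stable as a subgroup of $\mu_\m$. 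Invariance makes the subgroup of $\mu_\m \rtimes \Gal(\overline{K}/K)$ generated by $H$ and $\Gal(\overline{K}/K)$ equal to $H \rtimes \Gal(\overline{K}/K)$, of index $[\mu_\m:H]$; its fixed field is $A \cap K(\t)$, which agrees with $B$ thanks to $\Omega_f^{\mathrm{geom},\,\Gal(\overline{K}/K)} = \Omega_f$ (this is where the reduction $L_f = K$ enters concretely, identifying $\Omega_f$ as the $\Gal(\overline{K}/K)$-invariants of $\Omega_f \otimes_K \overline{K}$). Hence $[B:K(\mathbf{u})] = [\mu_\m:H] = [A:\overline{K}(\mathbf{u})]$, completing the proof. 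The main technical subtlety is the semidirect-product bookkeeping together with the verification that $H$ is $\Gal(\overline{K}/K)$-invariant, both of which rely essentially on the reduction step $L_f = K$.
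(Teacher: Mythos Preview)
Your argument has a genuine gap at the final step. You correctly reduce to showing the degree equality $[A:\overline K(\mathbf u)]=[B:K(\mathbf u)]$, and your semidirect--product computation correctly yields $[A\cap K(\t):K(\mathbf u)]=[\mu_\m:H]=[A:\overline K(\mathbf u)]$. The problem is the identification $A\cap K(\t)=B$. You justify it by invoking $(\Omega_f^{\mathrm{geom}})^{\Gal(\overline K/K)}=\Omega_f$, but this uses the \emph{tensor--product} action of $\Gal(\overline K/K)$ on $\Omega_f^{\mathrm{geom}}\cong \Omega_f\otimes_K\overline K$, which fixes $\Omega_f$ pointwise. The action that produces $A^{\Gal(\overline K/K)}=A\cap K(\t)$ is a different one: it is the action on $\overline K(\t)$ that fixes $\t$. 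These two actions both restrict to the given $\sigma$ on $\overline K$, but on $A$ they differ by an element of $\Gal(A/\overline K(\mathbf u))\cong \mu_\m/H$, and there is no reason this discrepancy is trivial. Concretely, if you extend the tensor action $1\otimes\sigma$ to an automorphism $\hat\sigma$ of $\overline{K(\mathbf u)}$, then $\hat\sigma(t_i)=\zeta_i t_i$ for some $\zeta\in\mu_\m$ not under your control, so $\hat\sigma$ need not fix $K(\t)$.

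Worse, your own computation shows that $[L_{f_\m}:K]=[A\cap K(\t):B]$, so the equality $A\cap K(\t)=B$ is literally equivalent to the conclusion $L_{f_\m}=K$; the argument is circular at this point. The paper's proof proceeds quite differently: it exploits that the maximal unramified subcover $W'\to G_L$ of the Galois closure is (geometrically) an isogeny of the torus, so for a suitable $n$ the map $[n\m]$ factors through it. Pulling back along $[n\m]$ then trivializes $W'$, forcing $L_{f_{n\m}}=L_f$, and the chain $L_f\subseteq L_{f_\m}\subseteq L_{f_{n\m}}=L_f$ finishes. This geometric input --- the structure of \'etale covers of $\G_m^r$ --- is what is missing from your approach.
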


\begin{proof}
Write $G=\G_m^r$ and $L=L_f$ for ease of notation, and let $\y = \t^m$ be regarded as an $r$-tuples of variables. Let $V\subseteq G\times A^{1}$ be the zero set of $f(\y,X)$, and let $\pi\colon V\to G; (\y,x)\mapsto \y$. Let $W$ be the Galois closure of $V\to G$ (namely, the normalization of $G$ in the Galois closure of the field extension $K(V)/K(G)$), and let $W'$ be the maximal unramified subcover of $W\to G$. In particular, the morphism $W'\to G$ factors through $W'\to W''\to G$, where $W''\cong G_L$ is the maximal scalar subcover, and $W'$ and $W$ can be regarded as geometrically integral $L$-varieties. Consider now the morphism $[\m]\colon G \to G$; $\g \mapsto \g^\m$; we base change along this morphism and get a diagram 
\[
 \begin{tikzcd}
W\times_{G,[\m]} G \arrow{r} \arrow{d} & W'\times_{G,[\m]} G \arrow{d} \arrow{r} & W''\times_{G,[\m]} G \arrow{d} \arrow{r} & G \arrow{d}{[\m]} \\
W \arrow{r} & W' \arrow{r} & W'' \arrow{r} & G.
\end{tikzcd}
\]
The Galois closure of the morphism $\{f(\t^m,x)=0\}\to G$; $(\t,x) \mapsto \t$ can be identified with an irreducible component of $W\times_{G,[\m]} G$. Note that $W''\cong W''\times_{G,[\m]} G\cong G_L$. This implies at once that $L \subseteq L_{f_\m}$. In particular, in order to conclude the proof it suffices to show that there exists $n$ so that $L = L_{f_{n\m}}$, where $n\m = (nm_1, \ldots, nm_r)$. Indeed, from this it follows that $L_{f_{n\m}} =L \subseteq L_{f_\m}\subseteq L_{f_{n\m}}$, whence equality holds throughout.

Choose $n$ so that the isogeny $G_L\to G_L$; $\g \to \g^{n\m}$ factors through $G_L\to W' \to G_L$. It follows that $W'\times_{G,[n\m]} G$ is isomorphic to a disjoint union of copies of $G_L$. In particular, letting $Y$ be an irreducible component of $W\times_{G,[n\m]} G$, we have that $Y \to G$ factors through $Y\to Y' \to W''\times_{G,[n\m]} G \to G$, where the middle map is an isomorphism. Since $Y\to Y'$ has no unramified subcovers, we deduce that $W''\times_{G,[n\m]} G$ is the maximal scalar subcover of $Y\to G$, which shows that $L_{f_{n\m}} =L$, as wanted.
\end{proof}

Let
\begin{equation}
\label{eq_p}
\ca P_f =\{\mathfrak p \mid  \mathfrak p \mbox{ is a prime of $\ca O_K$ satisfying (i)-(iii)}\},
\end{equation}
where
\begin{itemize}
	\item[(i)] $\mathrm N_{K/\Q}(\mathfrak p)=p$ is prime, so $\ca O_K/\mathfrak p \cong \F_p$.
    \item[(ii)] $f(\t,X) \in \F_p[\t,X]$ is  separable in $X$, $g_d(\t)\not\in \mathfrak p$, and $f(\t^d,X)$ is irreducible in $\overline{\F_p}[\t,X]$. 
    \item[(iii)] $\mathfrak p$ splits completely in $L_f$. 
\end{itemize}

(In (ii), we abuse notation and denote by $f\in \F_p[\t,X]$ the reduction of $f$ modulo $\mathfrak p$. We will freely adopt this convention from now on.) 
Recall that the density of a set of primes $\ca Q$ is defined by 
\[
\displaystyle\delta(\ca Q) = \lim_{x\to \infty}
 \frac{
 \#\{\mathfrak p\in \ca Q \mid  x<\mathrm{N}_{K/\Q}(\mathfrak p)\leq 2x\}
 }{x/\log x}.
\]
Similarly, we define the lower density $\underline{
\delta}(\ca Q)$ by replacing $\lim$ by $\liminf$.
\begin{lemma}\label{l deltaP>0}
    We have 
    $\delta(\ca P_f) = \frac{1}{[L_f:K]}>0$.
\end{lemma}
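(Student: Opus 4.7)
My plan is to handle the three conditions (i), (ii), (iii) defining $\ca P_f$ separately, with the key input being Chebotarev's density theorem for the extension $L_f/K$. First I would verify that $L_f/K$ is a finite Galois extension. Finiteness is clear since $L_f$ sits inside the (finite) splitting field $\Omega$ of $f$ (as a polynomial in $X$) over $K(\t)$; for normality, any $\alpha\in L_f=\Omega\cap\overline K$ has its minimal polynomial over $K(\t)$ coinciding with its minimal polynomial over $K$ (using $\overline K\cap K(\t)=K$ together with the fact that a $K$-irreducible polynomial in $K[X]$ remains irreducible over $K(\t)[X]$), so all $K$-conjugates of $\alpha$ are already $K(\t)$-conjugates and therefore lie in $\Omega\cap\overline K=L_f$. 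With $L_f/K$ finite Galois, Chebotarev's density theorem yields
\[
\delta\bigl(\{\mathfrak p\subseteq \ca O_K : \mathfrak p \text{ splits completely in } L_f/K\}\bigr)=\frac{1}{[L_f:K]}.
\]

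Next, I would check that conditions (i) and (ii) are automatic outside a negligible set. For (i): a prime $\mathfrak p$ of $K$ with $\mathrm N_{K/\Q}(\mathfrak p)=p^e$, $e\ge 2$, lying in $(x,2x]$ forces $p\le\sqrt{2x}$, so the number of such $\mathfrak p$ is at most $[K:\Q]\,\pi(\sqrt{2x})=O_K(\sqrt{x}/\log x)=o(x/\log x)$, a density-zero contribution. For (ii): separability of $f$ in $X$ modulo $\mathfrak p$ fails only for $\mathfrak p$ dividing a nonzero element of $\ca O_K[\t]$ (a discriminant of $f$ in $X$), which is nonzero because (PB) makes $f$ irreducible, and hence separable, in $X$ over $\overline{K(\t)}$ in characteristic zero; the condition $g_d\not\in\mathfrak p$ excludes only primes dividing the ideal generated by the coefficients of $g_d$, a finite set; and geometric irreducibility of $f(\t^d,X)$ over $\overline{\F_p}$, guaranteed over $\overline K$ by (PB) applied to $\m=(d,\ldots,d)$, propagates to all but finitely many residue fields $\ca O_K/\mathfrak p$ by the standard spreading-out principle for geometric integrality of finite-type schemes over $\spec \ca O_K$.

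Combining, $\ca P_f$ is obtained from the density-$\frac{1}{[L_f:K]}$ set of (iii) by intersecting with a density-$1$ set (condition (i)) and removing a finite set (the exceptional primes in (ii)), so $\delta(\ca P_f)=1/[L_f:K]$, which is positive because $[L_f:K]$ is finite. I expect the only point in this plan requiring more than a sentence of justification is the spreading-out argument for geometric irreducibility in (ii); the rest is immediate from Chebotarev and elementary prime counting.
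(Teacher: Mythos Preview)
Your proposal is correct and follows essentially the same approach as the paper: show that conditions (i) and (ii) each exclude only a density-zero (in fact, for (ii), finite) set of primes, and then apply Chebotarev to the finite extension $L_f/K$ for condition (iii). The paper cites \cite[Proposition~5.3, p.~241]{lang2013fundamentals} for the irreducibility part of (ii), where you invoke spreading out of geometric integrality --- these are the same underlying fact. Your explicit verification that $L_f/K$ is Galois is a detail the paper leaves implicit but which is indeed needed for the density to equal $1/[L_f:K]$ rather than $1/[\widetilde{L_f}:K]$.
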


\begin{proof}
    The set of primes satisfying (i) has density $1$. Only finitely many primes divide the coefficients of $g_d$, or the coefficients of the discriminant of $f$. By \cite[Proposition 5.3, p. 241]{lang2013fundamentals}, there are only finitely many primes  for which $f(\t^d,X)$ is not irreducible in $\overline{\F_p}[\t,X]$, hence the primes satisfying (ii) also have density $1$. Finally, by Chebotarev's density theorem, the primes satisfying (iii) have density $1/[L_f:K]$. This finishes the proof.
\end{proof}

The following lemma follows from \cite[Proposition 2.1]{zannier2010}.

\begin{lemma}
\label{l:absolutely_irreducible}
    If $\mathfrak p\in \ca P_f$, then for every $\m \in (\Zint_{>0})^r$, $f(\t^\m, X) \in \F_p[\t,X]$ is separable, of degree $d$ in $X$, and irreducible in $\overline{\F_p}[\t,X]$.
\end{lemma}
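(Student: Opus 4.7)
The plan is to handle the three conclusions of the lemma---degree $d$ in $X$, separability in $X$, and absolute irreducibility of $f(\t^\m, X)$ in $\overline{\F_p}[\t,X]$---separately. Conditions (i)--(iii) in the definition of $\ca P_f$ have been tailored so that the first two are formal, and the third reduces directly to \cite[Proposition 2.1]{zannier2010}.

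For the degree assertion, the coefficient of $X^d$ in $f(\t^\m, X)$ is $g_d(\t^\m)$. Condition (ii) guarantees that the reduction of $g_d(\t)$ modulo $\mathfrak p$ is a nonzero element of $\F_p[\t]$. Since the substitution $t_i\mapsto t_i^{m_i}$ is an injective ring endomorphism of the integral domain $\F_p[\t]$ (the $m_i$ being positive), it preserves nonvanishing; hence $g_d(\t^\m)\not\equiv 0 \pmod{\mathfrak p}$ and $\deg_X f(\t^\m, X)=d$ in $\F_p[\t,X]$.

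For separability, let $\Delta(\t) := \mathrm{disc}_X f(\t,X)$, a polynomial in the coefficients of $f$ regarded as a polynomial in $X$. Condition (ii) asserts precisely that $\Delta$ has nonzero reduction in $\F_p[\t]$. Since $\mathrm{disc}_X f(\t^\m, X) = \Delta(\t^\m)$, the same substitution argument shows $\Delta(\t^\m)\not\equiv 0$, and therefore $f(\t^\m, X)$ is separable in $X$ over $\F_p(\t)$. This also confirms that the reduction of $f(\t^\m,X)$ modulo $\mathfrak p$ is a well-defined polynomial of the expected degree, not a pathological degeneration.

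For absolute irreducibility, condition (ii) supplies absolute irreducibility of $\bar f(\t^d, X) \in \F_p[\t, X]$, where $d = \deg_X f$; this is exactly the hypothesis of \cite[Proposition 2.1]{zannier2010} applied to the reduction $\bar f$, and the conclusion of that proposition promotes absolute irreducibility at the specific pullback $\m = (d, \ldots, d)$ to absolute irreducibility of $\bar f(\t^\m, X)$ for every $\m\in(\Zint_{>0})^r$. Thus the proof contains no real obstacle: the substantive content has been outsourced to Zannier's proposition, whose role is precisely to propagate absolute irreducibility from one well-chosen multiplicative pullback to all of them, in the spirit of the scalar-subcover analysis already used in Lemma~\ref{l_field_scalars}.
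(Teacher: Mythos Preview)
Your treatment of the degree and separability claims is correct and matches the paper's argument verbatim. The gap is in the irreducibility step. Zannier's \cite[Proposition~2.1]{zannier2010} is stated in characteristic zero, so you cannot simply ``apply it to the reduction $\bar f$'' as a black box; one must check that the argument survives passage to $\F_p$. The paper does exactly this, and the case analysis it performs is not cosmetic. For constant vectors $\m=(m,\ldots,m)$ with $p\nmid m$ the authors observe that Zannier's \emph{proof} goes through; for arbitrary $\m$ with $p\nmid m_1\cdots m_r$ they factor the isogeny $[m_1\cdots m_r]$ through $[\m]$ to reduce to the constant case; and for $p\mid m_i$ they decompose $[\m]=[\m']\circ[\m'']$ into a purely inseparable $p$-power part and a prime-to-$p$ part, then use that the separable extension defined by $f(\t^{\m},X)$ over $\overline{\F_p}(\t^{\m'})$ is linearly disjoint from the purely inseparable extension $\overline{\F_p}(\t)/\overline{\F_p}(\t^{\m'})$.

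This last case is precisely where a naive invocation of Zannier's argument fails: his proof relies on the Kummer-theoretic structure of the \'etale isogeny $[m]$ on $\G_m$, which collapses when $p\mid m$. Your write-up asserts that Zannier's proposition ``promotes absolute irreducibility \ldots\ for every $\m\in(\Zint_{>0})^r$'' without addressing this, so the proof as written is incomplete.
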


\begin{proof}
Let $g_d(\t)$ and $\Delta(\t)$ be the leading coefficient and discriminant of $f$ as a polynomial in $X$. Then, $g_d(\t^\m)$ and $\Delta(\t^\m)$ are the leading coefficient and discriminant of $f(\t^\m,X)$. By condition (ii), $g_d(\t)$ and $\Delta(\t)$ are non zero modulo $\mathfrak p$, hence also $g_d(\t^m)$ and $\Delta(\t^\m)$.

Let $[\m]$ be the isogeny $(\G_m^r)_{\F_p} \to (\G_m^r)_{\F_p}$; $\g \mapsto \g^\m$, and  $[m]:=[(m,\ldots, m)]$, where $m$ is a positive integer.
If $\m=(m, \ldots, m)$ is a constant vector and $p\nmid m$, then the proof of \cite[Proposition 2.1]{zannier2010} applies here.
    
    If $\m=(m_1, \ldots, m_r)$ and $p\nmid m_1\cdots m_r$, then the isogeny $[m_1\cdots m_r]$ factors through $[\m]$, whence the statement follows from the previous paragraph. 
    
    Finally, if $p\mid m_1\cdots m_r$, let $[\m]=[\m']\circ [\m'']$ where for every $i=1,\ldots, r$, $m'_i$ is a $p$-power and $p\nmid m''_i$. By the previous paragraph $f(\t^{\m},X)$ is irreducible in $\overline{\F_p}[\t^{\m'},X]$. Moreover, $f(\t^{\m},X)$ is separable, while the cover $[\m']$ is purely inseparable, hence the corresponding extensions of $\overline{\F_p}(\t^{\m'})$ are linearly disjoint and so $f(\t^{\m},X)$ is irreducible in $\overline{\F_p}[\t,X]$.
\end{proof}

For a positive integer $\ell$ and $\a\in \ca O_K^r$ with $a_i$ nonzero and not a root of unity, let 
\begin{equation}
\label{eq_pfell}
\ca P_{f,\ell}(\a) \subseteq \ca P_f
\end{equation}
be the subset of primes $\mathfrak p\in \ca P_f$ such that  $a_1\cdots a_r\not\in \mathfrak p$, and the orders of $a_1, \ldots, a_r$ in $(\F_p)^\times$ are all at least $(p-1)/\ell$. 

\begin{lemma}
\label{l:large_ell}
    Assume GRH. If $\ell$ is sufficiently large depending on $f$ and $\a$, then $\underline \delta(\ca P_{f,\ell}(\a))>0$.
\end{lemma}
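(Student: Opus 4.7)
The plan is to deduce the lemma by combining the Erd\H{o}s-Murty / J\"arviniemi result \eqref{l_GRH}, applied separately to each $a_i$, with Lemma~\ref{l deltaP>0} on the density of $\ca P_f$. The single heavy input is \eqref{l_GRH}; the rest is density bookkeeping.

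First, for each $i=1,\dots,r$, invoke \eqref{l_GRH} on $a_i \in \ca O_K$ (which is nonzero and not a root of unity, by assumption on $\a$): under GRH, the lower density $d_\ell(a_i)$ of primes $\mathfrak{p}$ of $K$ with $\mathrm{N}_{K/\Q}(\mathfrak{p})=p$ prime and $\mathrm{ord}_{\mathfrak{p}}(a_i)\geq (p-1)/\ell$ satisfies $d_\ell(a_i)\to 1$ as $\ell\to\infty$. Let $\ca Q_\ell^{(i)}$ denote this set of primes, and set $\ca Q_\ell = \bigcap_{i=1}^r \ca Q_\ell^{(i)}$.

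Second, I would use the standard subadditivity of upper densities of complements: since the ambient set $\{\mathfrak{p} : x<\mathrm{N}_{K/\Q}(\mathfrak{p})\leq 2x\}$ has size $\sim x/\log x$, we have
\[
\underline{\delta}(\ca Q_\ell) \;\geq\; 1 - \sum_{i=1}^{r}\bigl(1 - d_\ell(a_i)\bigr),
\]
which follows from $\liminf(f_1+\dots+f_r)\geq \sum\liminf f_i$ applied to the indicator quotients of the complements $\ca Q_\ell^{(i)c}$. Hence $\underline{\delta}(\ca Q_\ell)\to 1$ as $\ell\to\infty$.

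Third, by Lemma~\ref{l deltaP>0}, $\delta(\ca P_f) = 1/[L_f:K]>0$. Applying the same liminf inequality to the pair $(\ca P_f,\ca Q_\ell)$ gives
\[
\underline{\delta}(\ca P_f\cap \ca Q_\ell) \;\geq\; \delta(\ca P_f) + \underline{\delta}(\ca Q_\ell) - 1 \;=\; \frac{1}{[L_f:K]} - \bigl(1-\underline{\delta}(\ca Q_\ell)\bigr).
\]
Choose $\ell$ so large that $1-\underline{\delta}(\ca Q_\ell) < \tfrac{1}{2[L_f:K]}$; this depends only on $K$, $f$ and $\a$. Then $\underline{\delta}(\ca P_f\cap \ca Q_\ell) \geq \tfrac{1}{2[L_f:K]} > 0$. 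Finally, the condition $a_1\cdots a_r\notin \mathfrak{p}$ excludes only the finitely many primes dividing the principal ideal $(a_1\cdots a_r)\subset \ca O_K$, which has no effect on lower density. Since $\ca P_{f,\ell}(\a)$ differs from $\ca P_f\cap \ca Q_\ell$ by this finite set, $\underline{\delta}(\ca P_{f,\ell}(\a))>0$, as required.

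There is no real obstacle: the argument is a routine inclusion-exclusion for lower densities once \eqref{l_GRH} is granted. The only point to be careful about is that the densities in \eqref{l_GRH} and in Lemma~\ref{l deltaP>0} are both the dyadic-window lower/natural densities defined in the Notation List, so the $\liminf$ inequality $\underline{\delta}(A\cap B)\geq \underline{\delta}(A)+\underline{\delta}(B)-1$ is directly applicable.
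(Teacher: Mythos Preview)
Your proof is correct and follows essentially the same approach as the paper: both combine \eqref{l_GRH} for each $a_i$ with Lemma~\ref{l deltaP>0} via a union bound on lower densities, arriving at the same threshold $\underline{\delta}(\ca P_{f,\ell}(\a))\geq \tfrac{1}{2}\delta(\ca P_f)=\tfrac{1}{2[L_f:K]}$. The only cosmetic difference is that the paper applies the union bound to $\ca P_f$ and the $\ca P_i$ simultaneously, whereas you first intersect the $\ca Q_\ell^{(i)}$ and then intersect with $\ca P_f$.
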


\begin{proof}
We have $\ca P_{f,\ell}(\a) = \ca P_{f} \cap \bigcap_{i=1}^r \ca P_i$, where $\ca P_i$ is the set of primes $\mathfrak p$ for which the order of $a_i$ modulo $\mathfrak p$ is $\geq (p-1)/\ell$. By Lemma~\ref{l deltaP>0}, $\delta(\ca P_f)>0$. By \eqref{l_GRH}, if $\ell$ is sufficiently large, then
$\underline \delta(\ca P_i)\geq 1-\frac{\delta(\ca P_f)}{2r}$. Hence, by a union bound,
\[
    \underline \delta(\ca P_{f,\ell}(\a))\geq 
    \delta(\ca P_{f}) -\sum_{i=1}^r  \frac{\delta(\ca P_f)}{2r} = \frac{\delta(\ca P_f)}{2}>0,
\]
as needed.
\end{proof}

The next lemma follows from applying Tur\'an's theorem (see \eqref{l:turan}) to a graph whose vertices are elements of a set of primes $\ca P$ of positive lower density.

\begin{lemma}\label{l:graph_theory}
    Let $\mathcal{P}$ be a set of primes of a number field $K$ of positive lower density, let $C>0$, let $x$  
    be sufficiently large depending on $\mathcal{P}$,  
    and let $0<z \le  \log(x)^C$. Then, there exist pairwise distinct primes $\mathfrak p_1, \ldots, \mathfrak p_t\in \ca P
    $, with $t\gg_{C,\mathcal{P}} 
    z$, $p_i:={\mathrm{N}}_{K/\Q}(\mathfrak p_i) \in (x,2x]$, $i=1,\ldots, t$, and $(p_i-1,p_j-1)\leq z$, for all $i\neq j$.
\end{lemma}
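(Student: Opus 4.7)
The plan is to apply Tur\'an's theorem \eqref{l:turan} to a graph $\Gamma$ on the vertex set
\[
S := \{\mathfrak p \in \mathcal{P} : \mathrm{N}_{K/\Q}(\mathfrak p) \in (x, 2x]\},
\]
in which $\mathfrak p$ is joined to $\mathfrak q$ precisely when $(p-1, q-1) \le z$ (with $p := \mathrm{N}_{K/\Q}(\mathfrak p)$, $q := \mathrm{N}_{K/\Q}(\mathfrak q)$). Since $\underline\delta(\mathcal{P}) > 0$, for $x$ large enough depending on $\mathcal P$ one has $|S| \ge c_1 x/\log x$ with $c_1 = c_1(\mathcal P) > 0$. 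To extract a clique of size $\gg z$, it suffices by Tur\'an to bound the number of non-edges of $\Gamma$ by $c_2|S|^2/z$ for some $c_2 = c_2(\mathcal P, K)$.

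Let $B$ be the number of ordered pairs $(\mathfrak p, \mathfrak q) \in S^2$ with $\mathfrak p \neq \mathfrak q$ and $(p-1, q-1) > z$, and for each $d \ge 1$ put $T_d := \#\{\mathfrak p \in S : d \mid p-1\}$. Since each bad pair determines a unique $d := (p-1,q-1) > z$ dividing both $p-1$ and $q-1$, we have $B \le \sum_{d > z} T_d^2$. I would estimate $T_d$ by the Brun--Titchmarsh inequality applied to the progression $p \equiv 1 \pmod d$, together with the fact that at most $[K:\Q]$ primes of $\mathcal O_K$ of residue degree $1$ lie above a given rational prime; this gives $T_d \ll_K x/(\phi(d)\log(2x/d))$ for $d < 2x$. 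Splitting the sum at $d = x^{1/2}$: the range $z < d \le x^{1/2}$ has $\log(2x/d) \gg \log x$ and contributes $\ll_K (x^2/\log^2 x)\sum_{d > z}1/\phi(d)^2$, while the range $x^{1/2} < d \le 2x$, estimated trivially via $T_d \le 2[K:\Q]x/d$, contributes $\ll_K x^{3/2}$, which is absorbed by $x^2/(z\log^2 x)$ once $z \le \log^C x$ and $x$ is large enough in terms of $C$.

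The crux of the argument is therefore the estimate
\[
\sum_{d > z} \frac{1}{\phi(d)^2} \ll \frac{1}{z},
\]
which I expect to be the main technical point. The crude bound $\phi(d) \gg d/\log\log d$ would only give $(\log\log z)^2/z$, losing a $(\log\log z)^2$ factor in the clique size and falling short of $\gg z$. The sharp form follows from the asymptotic $\sum_{d \le N}(d/\phi(d))^2 = AN + o(N)$ for some constant $A > 0$ --- obtained by expanding $(d/\phi(d))^2$ via the identity $d/\phi(d) = \sum_{e \mid d,\ e\,\text{sqfree}} 1/\phi(e)$, swapping the order of summation, and using the convergence of $\sum_{e_1, e_2\,\text{sqfree}}(\phi(e_1)\phi(e_2)[e_1,e_2])^{-1}$ --- combined with Abel summation. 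Putting everything together yields $B \ll_{K,\mathcal P} x^2/(z\log^2 x) \ll_\mathcal P |S|^2/z$, and Tur\'an's theorem produces the desired clique.
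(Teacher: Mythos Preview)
Your argument is correct and follows the same overall strategy as the paper: build the graph on $\{\mathfrak p\in\mathcal P:\mathrm N_{K/\Q}(\mathfrak p)\in(x,2x]\}$ with edges for $(p-1,q-1)\le z$, bound the number of non-edges by $\ll |S|^2/z$ via the divisor-sum $\sum_{d>z}T_d^2$ and the key estimate $\sum_{d>z}\phi(d)^{-2}\ll 1/z$, and finish with Tur\'an.

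The one genuine difference is the input for counting primes in progressions. The paper splits at $d=z(\log x)^2\le(\log x)^{C+2}$ and invokes Siegel--Walfisz in the small range, whereas you split at $d=x^{1/2}$ and use Brun--Titchmarsh, which is valid throughout $d<2x$ and gives $T_d\ll_K x/(\phi(d)\log(2x/d))$. Both routes yield the same bound $\ll_K x^2/(z\log^2 x)$; your choice has the mild advantage that Brun--Titchmarsh is effective and requires no restriction on the size of the modulus, so the splitting point is less delicate. The paper's footnote for $\sum_{d>z}\phi(d)^{-2}\ll 1/z$ (via $\sum_{d\le N}(d/\phi(d))^2=O(N)$ and partial summation) matches your sketch.
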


\begin{proof}
    In this proof we write $\gg$ for $\gg_{\mathcal{P}}$. 
    Let $V$ be the graph whose vertices are $\mathfrak p \in \ca P
    $ with $p:=\mathrm{N}_{K/\Q}(\mathfrak p) \in (x,2x]$. We connect $\mathfrak p\neq \mathfrak q$ by an edge if and only if $(p-1,q-1) 
    \leq z$. 
    Let $n\gg x/\log x$ be the number of vertices.

    Let $M$ be the number of pairs of vertices not joined by an edges, that is $M=\binom{n}{2}-e$, where $e$ is the number of edges.
    If $\mathfrak p\neq \mathfrak q$ are not connected by an edge, then there exists $d>z$ such that $d\mid(p-1)$ and $d\mid (q-1)$. The number of $\mathfrak p$ with fixed norm $p$ is $\ll 1$, hence 
    \[
        M\ll \sum_{d>z} A_d,
    \]
    where $A_d = \# \{ (p,q) : p\equiv 1 \pmod d, \ q\equiv 1\pmod d, \mbox{ and } x<p,q\leq 2x\}$. 
    
    First assume that $d\le z(\log x)^2$. Then, by the Siegel--Walfisz theorem, the number of primes $p\equiv 1\pmod d$ in $(x,2x]$ is $\ll_C x/(\phi(d)\log x)$. So $
    A_d \ll_C \frac{x^2}{(\log x)^2}\phi(d)^2$. 
    Since\footnote{By \cite[Theorem 2.14]{montgomery2007multiplicative}, $\sum_{d\leq x} d^2/\phi(d)^2=O(x)$. Apply summation by parts (Abel's summation formula) to $\sum \frac{1}{\phi(d)^2} = \sum \frac{d^2}{\phi(d)^2}\frac{1}{d^2}$ to get the desired inequality.}
    $\sum_{d>z} \frac{1}{\phi(d)^2} \ll \frac{1}{z}$, we conclude that 
    \[
        \sum_{z<d\le z(\log x)^2} A_d \ll_C \frac{x^2}{z (\log x)^2}.
    \]
    Next assume that $z(\log x)^2<d<2x$. Then, trivially $A_d\leq (2x)^2/d^{2}$, and so $\sum_{z(\log x)^2<d<2x}A_d\leq 4x^2\sum_{d>z(\log x)^2} d^{-2} \ll \frac{x^{2}}{z(\log x)^2}$. So 
    \begin{equation}\label{eq:boundM}
        M \ll \sum_{z<d\le z(\log x)^2} A_d  + \sum_{z(\log x)^2<d<2x} A_d \ll_C \frac{x^2}{z(\log x)^2} \ll_C \frac{n^2}{z}.
    \end{equation}
    Thus, the number of edges is 
    \[
        e = \binom{n}{2} - M  \geq \frac{n^2}{2}\Big(\frac{n-1}{n}-\frac{2C'}{z}\Big) = \frac{n^2}{2}\Big(1-\frac{C''}{z}\Big),
    \]
    where $C'>0$ is the implied constant given in \eqref{eq:boundM}. 
   We may assume $z\geq 2C''$, otherwise we simply take $t=1$. We deduce by \eqref{l:turan} that $V$ contains a complete subgraph with $t \gg_C z$ edges, which concludes the proof.
\end{proof}

\section{Reduction Lemmas}
Classically, there are two basic types of thin sets in the context of Hilbert's irreducibility theorem: A thin set of type I is the set of rational points in a proper Zariski closed subvariety. A thin set of  type II is the set of rational points which may be lifted to a rational point in a degree $\geq 2$ cover,  cf.\ \cite{Serre_Topics}. 
In this section we establish bounds for basic thin sets modulo primes (assuming (PB)).

\subsection{Thin set of type II}\label{subsc:oneprime}
Recall that $f=g_d(\t)X^d + \cdots \in \ca O_K[\t,X]$ is a polynomial satisfying (PB). 
Fix a prime $\mathfrak p\in \ca P_{f}$, let $p=\mathrm N_{K/\Q}(\mathfrak p)$, and let $M$ be a multiple of $p-1$. 
Let 
\begin{equation}\label{eq:Anp}
    \ca A_{\mathfrak p}=\ca A_{\mathfrak p,M}:=\{\n \in (\Zint/M\Zint)^r \mid f(\a^\n,X) \mbox{ has a root modulo $\mathfrak p$ and }  g_d(\a^\n)\neq 0\mod  \mathfrak p\}.
\end{equation}
Since $\a$ is considered modulo $p$ and $M$ is a multiple of $p-1$, then $\a^\n$ is well defined. 

\begin{lemma}
\label{l_one_prime}
Let $\ell \ge 1$ and let $\mathfrak p\in \ca P_{f,\ell}(\a)$ and $p=\mathrm N_{K/\Q}(\mathfrak p)$. Let $\n$ be a random variable taking the values of $(\Zint/M\Zint)^r$ uniformly. Then,
    \[
        {\rm Prob}(\n\in  \ca A_{\mathfrak p}) \leq 1 -\frac{1}{d} + O_{f,\ell}( p^{-1/2}).
    \]
\end{lemma}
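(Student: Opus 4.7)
The plan is to reformulate $\ca A_\mathfrak p$ as a condition on $\x\in \G_m^r(\F_p)$ via an appropriate power isogeny, and then apply a function-field Chebotarev argument to the Galois closure of the cover $\{f(\t^\m,X)=0\}\to \G_m^r$.

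\textbf{Step 1 (Reformulation as a count on $\G_m^r(\F_p)$).} For each $i$, since $\op{ord}(a_i)\geq (p-1)/\ell$, put $m_i:=(p-1)/\op{ord}(a_i)\in\{1,\ldots,\ell\}$ and $\m:=(m_1,\ldots,m_r)$. Then $\langle a_i\rangle=\{x^{m_i}:x\in \F_p^\times\}$, so the image of $\n\mapsto \a^\n$ is $H:=\langle a_1\rangle\times\cdots\times\langle a_r\rangle\subseteq \G_m^r(\F_p)$, which coincides with the image of the isogeny $[\m]\colon \G_m^r(\F_p)\to \G_m^r(\F_p)$ (all of whose fibres have size $m_1\cdots m_r$). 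Since $\n\mapsto \a^\n$ is uniform onto $H$ and $[\m]$ is equi-to-one on its image, a direct count gives
\[
    {\rm Prob}(\n\in \ca A_\mathfrak p)=\frac{\#\{\x\in \G_m^r(\F_p):f(\x^\m,X)\text{ has a root in }\F_p\text{ and }g_d(\x^\m)\neq 0\}}{(p-1)^r}.
\]

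\textbf{Step 2 (Galois-theoretic setup for $F:=f(\t^\m,X)$).} By Lemma~\ref{l:absolutely_irreducible}, $F$ is separable of degree $d$ in $X$ and absolutely irreducible over $\F_p$, with leading coefficient $g_d(\t^\m)$ nonzero modulo $\mathfrak p$. Hence the cover $Y:=\{F=0\}\to U:=\G_m^r\setminus\{g_d(\t^\m)=0\}$ is generically \'etale of degree $d$, and its geometric Galois group $G\leq \op{Sym}(d)$ acts transitively on the $d$ roots. Moreover, Lemma~\ref{l_field_scalars} gives $L_{f_\m}=L_f$, and since $\mathfrak p\in \ca P_f$ splits completely in $L_f$, the constant field of the Galois closure of $Y\to U$ is $\F_p$; therefore its arithmetic and geometric Galois groups both equal $G$. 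Each unramified $\x\in U(\F_p)$ then yields a Frobenius class $[\op{Frob}_\x]\subseteq G$, and $F(\x,X)$ has a root in $\F_p$ iff $\op{Frob}_\x$ is not a derangement on the $d$ roots.

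\textbf{Step 3 (Chebotarev + Cameron--Cohen).} The function-field Chebotarev density theorem (equivalently, Lang--Weil applied to each geometrically connected component of the Galois closure) gives, for every conjugacy class $C\subseteq G$,
\[
    \#\{\x\in U(\F_p):[\op{Frob}_\x]=C\}=\frac{|C|}{|G|}(p-1)^r+O_{f,\ell}(p^{r-1/2}).
\]
By the Cameron--Cohen bound, a transitive subgroup of $\op{Sym}(d)$ on $d\geq 2$ points contains at least $|G|/d$ derangements. Summing over derangement classes bounds the number of $\x\in U(\F_p)$ for which $F(\x,X)$ has a root in $\F_p$ by $(1-1/d)(p-1)^r+O_{f,\ell}(p^{r-1/2})$; substituting into the formula of Step~1 yields the lemma. (For $d=1$ the stated bound is trivially $\leq 1$.)

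\textbf{Main obstacle.} The delicate point is keeping the Chebotarev error uniform in $p$ even though the cover $F=f(\t^\m,X)$ depends on $p$ through $\m$. This is exactly what the hypothesis $\mathfrak p\in \ca P_{f,\ell}(\a)$ provides: the entries of $\m$ are bounded by $\ell$, so the degrees and geometric complexity of $Y$ and of its Galois closure are bounded purely in terms of $f$ and $\ell$, which is what allows the error term $O_{f,\ell}(p^{-1/2})$. The other crucial inputs --- absolute irreducibility of $F$ and the equality of arithmetic and geometric Galois groups --- are precisely what Lemmas~\ref{l:absolutely_irreducible} and~\ref{l_field_scalars} give us in conjunction with $\mathfrak p\in \ca P_f$.
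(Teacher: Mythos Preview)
Your proof is correct and follows essentially the same approach as the paper: reduce to a count of $\x\in\G_m^r(\F_p)$ via the power isogeny $[\m]$ with $m_i\le\ell$, use Lemma~\ref{l:absolutely_irreducible} for absolute irreducibility and Lemma~\ref{l_field_scalars} (plus $\mathfrak p$ splitting in $L_f$) to force the Galois closure to be regular over $\F_p$, and then apply function-field Chebotarev together with the Cameron--Cohen derangement bound. One small slip in your final parenthetical: for $d=1$ the stated bound is $0+O_{f,\ell}(p^{-1/2})$, not ``trivially $\le 1$''; this degenerate case is not actually covered by the argument (nor by the paper's), but it is also never used downstream.
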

\begin{proof}
    In this proof, all 
    polynomials are considered to be over $\F_p$; for brevity, we somtimes omit this from the notation. We will abbreviate and write $a_i$ also for the image of $a_i$ in $\F_p^\times$. Since the value of $\a^\n$ is defined by $\n\mod (p-1)$ and since the pushforward of the uniform measure is also uniform, we may assume without loss of generality that $M=p-1$. 

    For every $i$, we may write $a_i=b_i^{m_i}$, where $\gen{b_i}=\F_p^\times$, $m_i\le \ell$ and $(p-1)/m_i$ is the order of $a_i$. Let $f_\m(\t,X) = f(\t^\m,X)$. By Lemma~\ref{l:absolutely_irreducible}, $f_\m$ is separable in $X$, irreducible in $\overline{\F_p}[\t,X]$, and $\deg_X(f_\m)=\deg_X(f)=d$. In particular, the leading coefficient $g_{d,\m}(\t) := g_{d}(\t^\m)$ of $f_\m$ is nonzero.
        By Lemma~\ref{l_field_scalars}, $L_f=L_{f_\m}$, so $\mathfrak p$ totally splits in $L_{f_\m}$. We get 
    \[
    \begin{split}
        \ca A_{\mathfrak p} &= \{ \n \in (\Zint/(p-1)\Zint)^r \mid f_{\m}(\b^\n,X) \mbox{ has a root modulo $\mathfrak p$ and }  g_{d,\m}(\b^\n)\neq 0\mod  \mathfrak p\}\\
            &= \{\y \in (\F_p^\times)^r \mid f_\m(\y,X)\in \F_p[X] \textrm{ has a root in $\F_p$ and $g_{d,\m}(\y)\neq 0$ in  $\F_p$}\}.
    \end{split}
    \]
    Write $\ca B = \{\y \in (\F_p)^r \mid f_\m(\y,X)\in \F_p[X] \textrm{ has a root in $\F_p$ and $g_{d,\m}(\y)\neq 0$ in  $\F_p$}\} $. Then, 
    \[
        {\rm Prob}(\n \in \ca A_{\mathfrak p})=\frac{\#\ca A_{\mathfrak{p}}}{(p-1)^r} = \frac{\# \ca B}{p^r} + O(p^{-1}),
    \]
    so it suffices to prove that $\frac{\# \ca B}{p^r} \leq 1-\frac1d + O_{f,\ell}(p^{-1/2})$. This is a classical bound; we follow 
    the arguments of \cite{Serre_JTHM}.
    Since $\mathfrak p$ splits completely in $L_{f_\m}$,  the Galois closure $F$ of $f_\m(\t, X)\in \F_p[\t,X]$ over $\F_p(\t)$ is regular over $\F_p$; let $H$ be the Galois group of $F/\F_q(\t)$ viewed as a permutation group via the action on the roots of $f_\m(\t,X)$. So $H$ is transitive, since $f_\m$ is irreducible. Let $\mathcal C$ be the set of $\sigma\in H$ having a fixed point. Then $\mathcal C$ is a union of conjugacy classes, and since $H$ is transitive, $|\mathcal C|/|H| \leq  1-1/d$ (\cite[Theorem~5]{Serre_JTHM}).
    
    By an explicit function field Chebotarev's density theorem (see e.g.\ \cite[Proposition 6.4.8]{friedjarden} or \cite[Theorem 3]{entin}) we conclude that  
    \[
     \frac{\# \ca B}{p^r}  = \frac{|\mathcal C|}{|H|} + O_{f,\ell}(p^{-1/2}) \leq  1 - \frac{1}{d} + O_{f,\ell}(p^{-1/2}),
    \]
    as needed.
\end{proof}

We show that the events $\ca A_{\mathfrak p}$ for distinct primes $\mathfrak p\in \ca P_{f,\ell}(\a)$ are almost independent under the assumption that the $p-1$ are `almost' coprime:
\begin{lemma}
\label{l_many_primes} 
    Let $\ell, x,z>0$. Let $\mathfrak p_1,\ldots, \mathfrak p_t\in \ca P_{f,\ell}(\a)$ be primes with respective norms  $p_1, \ldots, p_t$. Assume that $p_v\in (x,2x]$, $(p_v-1,p_u-1)\leq z$ for all $v\neq u$. Let $M$ be a common multiple of $p_v-1$, $v=1,\ldots, t$ and $\n$  a random variable taking the values of $(\Zint/M\Zint)^r$ uniformly. Then, 
    \[
        {\rm Prob}\Big(\n\in \bigcap_{v=1}^t \ca A_{{\mathfrak p}_v}\Big) \leq  (1-d^{-1})^t + O_{f,z,t,\ell}(x^{-1/2}),
    \]
    with $\ca A_{{\mathfrak p}_v}=\ca A_{{\mathfrak p}_v,M}$ as defined in \eqref{eq:Anp}.
\end{lemma}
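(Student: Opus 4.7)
My plan is an orthogonality argument that converts the near-independence arising from $\gcd(p_v-1,p_u-1)\leq z$ into a clean main-term-plus-error estimate, via Fourier analysis on the ambient group and a Weil-type bound for the Fourier coefficients. As a first reduction, we may assume $M = L := \mathrm{lcm}(p_1-1,\ldots,p_t-1)$, since both $\ca A_{\mathfrak p_v,M}$ and the uniform measure on $(\Zint/M\Zint)^r$ descend to $(\Zint/L\Zint)^r$. Set $d_v := p_v-1$, $G_v := (\Zint/d_v\Zint)^r$, $\pi_v\colon(\Zint/L\Zint)^r\twoheadrightarrow G_v$, and view $\chi_v := \mathbf 1_{\ca A_{\mathfrak p_v}}$ as a function on $G_v$ pulled back to the ambient group. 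Lemma~\ref{l_one_prime} gives $\alpha_v := \E\chi_v \leq 1 - 1/d + O_{f,\ell}(p_v^{-1/2})$.

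I would Fourier-expand $\chi_v = \alpha_v + \sum_{\psi\neq 1}\widehat\chi_v(\psi)\,\psi$ on $G_v$, multiply out, and apply orthogonality on $(\Zint/L\Zint)^r$ to obtain
\[
\E\!\Big[\prod_{v=1}^t\chi_v\Big] \ =\ \prod_{v=1}^t\alpha_v \ +\ \sum_{\substack{\emptyset\ne S\subseteq [t]\\ (\psi_v)_{v\in S}:\ \psi_v\neq 1\\ \prod_{v\in S}\psi_v\circ\pi_v=1}}\Big(\prod_{v\notin S}\alpha_v\Big)\prod_{v\in S}\widehat\chi_v(\psi_v).
\]
The case $|S|=1$ is automatically empty because $\pi_v$ is surjective, so only $|S|\ge 2$ contribute. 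The argument then splits into an analytic input bounding $\widehat\chi_v(\psi)$ and a combinatorial input counting constrained tuples $(\psi_v)$.

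For the analytic input, I would prove $|\widehat\chi_v(\psi)| = O_{f,\ell}(p_v^{-1/2})$ for every $\psi\neq 1$. Using the bijection $\y = \b_v^\n$ of the proof of Lemma~\ref{l_one_prime} (where $\b_v$ is a vector of primitive roots modulo $\mathfrak p_v$ and $\a = \b_v^{\m_v}$ with each $m_{v,i}\leq \ell$), the coefficient equals, up to $(p_v-1)^{-r}$, a sum of the indicator of $\{f_{\m_v}(\y,X)\text{ has a root in }\F_{p_v}\}$ weighted by a non-trivial multiplicative character $\eta$ on $(\F_{p_v}^\times)^r$. Writing this indicator as a class function on the Galois group of $f_{\m_v}(\t,X)$ (absolutely irreducible and with no scalar extension by Lemmas~\ref{l:absolutely_irreducible} and \ref{l_field_scalars}) and expanding in irreducible characters yields a combination of character sums $\sum_\y\chi_\rho(\mathrm{Frob}_\y)\eta(\y)$. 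Because $\eta$ is non-trivial, these sums have no main term and are $O_{f,\ell}(p_v^{r-1/2})$ by a character-twisted version of the effective function-field Chebotarev/Weil estimate used in the proof of Lemma~\ref{l_one_prime}.

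For the combinatorial input, writing $\psi_v\leftrightarrow\mathbf k_v\in G_v$, the resonance $\prod_{v\in S}\psi_v\circ\pi_v=1$ amounts, coordinatewise, to $\sum_{v\in S}k_{v,i}/d_v\in\Zint$, which cuts out a subgroup of $\prod_{v\in S}\Zint/d_v\Zint$ of index $L_S := \mathrm{lcm}_{v\in S}d_v$. Hence the number of contributing tuples is $(\prod_{v\in S}d_v/L_S)^r$, and a valuation-by-valuation comparison gives $\prod_{v\in S}d_v/L_S\leq\prod_{\{u,v\}\subseteq S}\gcd(d_u,d_v)\leq z^{|S|(|S|-1)/2}$. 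Therefore each $S$ with $|S|\ge 2$ contributes at most $z^{r|S|(|S|-1)/2}\cdot O(x^{-1/2})^{|S|} \leq O_{f,\ell,z,t}(x^{-1})$, and summing over $S$ together with $\prod_v\alpha_v\leq (1-d^{-1})^t+O_{f,\ell,t}(x^{-1/2})$ yields the claim. The main obstacle is the Fourier-coefficient bound: Lemma~\ref{l_one_prime} only needs the trivial-character case of function-field Chebotarev, but here I need an effective estimate with a non-trivial multiplicative twist.
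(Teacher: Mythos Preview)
Your Fourier-analytic argument is correct and takes a genuinely different route from the paper. The paper proceeds by the chain rule, writing ${\rm Prob}\big(\bigcap_v \ca A_{\mathfrak p_v}\big)=\prod_{u} P_u$ with $P_u={\rm Prob}\big(\ca A_{\mathfrak p_u}\mid \bigcap_{v<u}\ca A_{\mathfrak p_v}\big)$, and then conditions on the residue $\n\equiv\c\pmod{M_u}$ where $M_u=[p_1-1,\ldots,p_{u-1}-1]$. Since $g:=(p_u-1,M_u)\le z^{u-1}$, the conditioned event is exactly $\ca A_{\mathfrak p_u}$ for the substituted polynomial $\tilde f(\t,X)=f(a_1^{c_1}t_1^{g},\ldots,a_r^{c_r}t_r^{g},X)$; one checks via Lemma~\ref{l_field_scalars} that $\mathfrak p_u\in\ca P_{\tilde f,\ell}$ and applies Lemma~\ref{l_one_prime} to $\tilde f$ verbatim, giving $P_u\le 1-d^{-1}+O_{f,z,t,\ell}(x^{-1/2})$.

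The trade-off is exactly the obstacle you flag. The paper's substitution trick absorbs the conditioning into a modified polynomial, so only the \emph{untwisted} function-field Chebotarev bound already invoked in Lemma~\ref{l_one_prime} is needed; the cost is that the implied constant depends on $\tilde f$ (hence on $g\le z^{t}$), which is why the error is $O_{f,z,t,\ell}$. Your approach is structurally cleaner and even yields a sharper off-diagonal term $O_{f,\ell,z,t}(x^{-1})$, but it genuinely requires the multiplicatively twisted estimate $\sum_{\y}\chi_\rho(\mathrm{Frob}_\y)\,\eta(\y)=O_{f,\ell}(p^{\,r-1/2})$ for non-trivial $\eta$. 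This does hold (via Deligne's Weil~II applied to the relevant twisted sheaf), but it is a strictly stronger input than anything the paper cites, and the paper's conditional-probability argument is precisely engineered to avoid it.
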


\begin{proof}
    Given $u\geq 1$, let $M_u = [p_1-1,\ldots, p_{u-1}-1]$ (so that $M_1=1$) 
    and $\c\in \Zint^r$.
    Let
    \[
        P_{\c, u} := {\rm Prob}(\n\in \ca A_{{\mathfrak p}_u} \mid \n \equiv \c \mod M_{u}).
    \] 
    Let $g=(p_u-1,M_u)$; so $g\leq z^u$. Since the image of $\Zint/M_{u}(p_u-1) \Zint \to \Zint/M_{u} \Zint \times  \Zint/(p_u-1) \Zint $ is $ \Zint/M_{u} \Zint\times_{\Zint/g \Zint} \Zint/(p_u-1) \Zint$, we get that
    \[
        P_{\c, u} = {\rm Prob}(\n\in \ca A_{{\mathfrak p}_u} \mid \n \equiv \c \mod g).
    \]
    If $\n\equiv \c\mod g$, we may write $\n = \c + \x g$ with $\x$ uniform in $(\Zint/M\Zint)^r$. Then $a_i^{n_i} = a_i^{c_i} (a_i^{x_i})^g$. 
    Let 
    \[
        \tilde{f}(\t,X) = f(a_1^{c_1}t_1^{g},\ldots, a_r^{c_r}t_r^g,X) =f_{g}(\a^\c \t,X),
    \]
    where $\a^\c\t :=(a_1^{c_1}t_1,\ldots, a_r^{c_r}t_r)$ and $f_g(\t,X) = f(\t^g,X)$. Then $f(\a^\n,X) = \tilde{f}(\a^{\x},X)$. 
    
    The splitting fields of $f_g$ and $\tilde{f}$ over $K(\t)$ are isomorphic over $K$. Thus $\mathfrak p_u$ satisfies condition (ii) in the definition of $\ca P_{\tilde{f}}$, see  \eqref{eq_p}. Also, the isomorphism of the splitting fields implies that $L_{\tilde{f}} = L_{f_{g}}$, and by  Lemma~\ref{l_field_scalars}, $L_{f_{g}}=L_f$, so that $L_{\tilde{f}}=L_{f}$. Thus, $\mathfrak p_u$ satisfies also condition (iii), and hence $\mathfrak p_u\in \ca P_{\tilde{f},\ell}$. 
    Applying Lemma~\ref{l_one_prime} to $\tilde{f}$, and recalling that $\x$ is uniform and that $\tilde{f}$ depends only on $f$, $z$, and $t$, we get
    \[
    \begin{split}
        P_{\c,u} &= 
            {\rm Prob}( 
                \n\in \ca A_{\mathfrak{p}_u} \mid \n\equiv \c\mod g
            ) 
            = {\rm Prob}(\c+\x g \in \ca A_{\mathfrak p_u})
            \\
        &= {\rm Prob}( f(\a^{\c+\x g},X) 
        \mbox{ 
            has a root modulo $\mathfrak p$ and $g_d(\a^{\c+\x g})\neq 0$ in $\F_{p_u}$\}
            }
            )
        \\
        &=
        {\rm Prob}( \tilde{f}(\a^\x ,X) 
        \mbox{ 
            has a root modulo $\mathfrak p$ and $\tilde{g}_d(\a^\x):=g_d(\a^{\c+\x g})\neq 0$ in $\F_{p_u}$\}}
            )\\   
            &\leq 1-d^{-1} + O_{\tilde{f},\ell}(p_u^{-1/2}) = 1-d^{-1} + O_{f,z,t,\ell}(x^{-1/2}).
    \end{split}
    \]         
    By the law of total probability,
    \[
    \begin{split}
        P_{u} &:= {\rm Prob}\Big(\ca A_{{\mathfrak p}_u} \Big| \bigcap_{v=1}^{u-1} {\ca A_{{\mathfrak p}_v}}\Big) \\
        &= \sum_{\substack{\c \mod M_u \\ 
        \c\in \bigcap_{v=1}^{u-1} \ca A_{{\mathfrak p}_v}}} {\rm Prob} (\n\in A_{{\mathfrak p}_u} \mid \n\equiv \c\mod M_u) {\rm Prob} (\n\equiv \c \mod M_u \Big| \bigcap_{v=1}^{u-1} {\ca A_{{\mathfrak p}_v}}) \\ 
        &\leq 1-d^{-1} + O_{f,z,t,\ell} (x^{-1/2}).
    \end{split}
    \]
    Therefore, 
    \[
        {\rm Prob}\Big(\bigcap_{v=1}^t\ca A_{{\mathfrak p}_v}\Big) = \prod_{u=1}^t P_{u} \leq (1-d^{-1})^t + O_{f,z,t,\ell}(x^{-1/2}),
    \]
    as needed.
\end{proof}

\subsection{Thin set of type I}
Let $C$ be a Zariski closed proper subvariety of $\G_m^r$, let $\a\in \ca O_K$ with $a_i$ nonzero and not roots of unity. For a sufficiently large prime $\mathfrak p$ of $\mathcal{O}_K$ with $\mathrm N_{K/\Q}(\mathfrak p) = p$ and $a_i\not\in \mathfrak p$, we set 
\[
Z_{C,N,\mathfrak p}:=\{\n \in (\Zint \cap [-N,N])^r \mid \a^\n\in C(\F_p)\}.
\]

\begin{lemma}
    \label{l:zariski_closed}
    Let $C$ be a Zariski closed proper subvariety of $\G_m^r$, let $\a\in \ca O_K$ with $a_i$ nonzero and not roots of unity, let $\mathfrak p$ be a prime of $\ca O_K$ such that $\mathrm N_{K/\Q}(\mathfrak p)=p$ is prime, and let $\ell >0$. Assume that for each $i$, the order of $a_i\mod \mathfrak p$ is at least $(p-1)/\ell$. If $p$ is sufficiently large depending on $C$, then 
	\[
	\frac{\#Z_{C,N,\mathfrak p}}{(2N+1)^r} = O_{\deg C,\ell}(p^{-1}) + O(pN^{-1}).
	\]
\end{lemma}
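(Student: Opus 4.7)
The plan is to combine a counting of fibres of the map $\n \mapsto \a^\n$ on $[-N,N]^r$ with a standard Schwartz--Zippel/Lang--Weil point-count for $C(\F_p)$, and then to split into the regimes $p \le 2N+1$ and $p > 2N+1$, where the main contribution comes from the explicit counting and from a trivial estimate, respectively.

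First I would exploit the subgroup structure: $\a^\n$ depends only on $\n \bmod (p-1)$ and lands in $H := \prod_{i=1}^r \langle a_i \bmod \mathfrak p\rangle$ inside $(\F_p^\times)^r$. The order hypothesis gives $|H_i| \ge (p-1)/\ell$ for each coordinate, so for every $\y \in H$ the fibre of $\n \mapsto \a^\n$ over $\y$ in $[-N,N]^r$ is a product of arithmetic progressions, each of size at most $2N/|H_i| + 1 \le 2N\ell/(p-1) + 1$. Next I would invoke the standard bound $|C(\F_p)| \le (\deg C)\, p^{r-1}$, valid for $p$ large enough depending on $C$ (via Schwartz--Zippel on a hypersurface containing $C$, or Lang--Weil component by component). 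Combining both and dividing by $(2N+1)^r$, and using $\tfrac{2N\ell/(p-1)+1}{2N+1} \le \tfrac{\ell}{p-1} + \tfrac{1}{2N+1}$, I obtain
\[
\frac{\#Z_{C,N,\mathfrak p}}{(2N+1)^r} \le (\deg C)\, p^{r-1}\left(\frac{\ell}{p-1} + \frac{1}{2N+1}\right)^r.
\]

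Finally I would split into cases. If $p \le 2N+1$, then $\tfrac{1}{2N+1} \le \tfrac{1}{p-1}$, the bracket is $\le \bigl(\tfrac{\ell+1}{p-1}\bigr)^r$, and the whole quantity is $O_{r,\ell,\deg C}(1/p)$. If instead $p > 2N+1$, the trivial bound $\#Z_{C,N,\mathfrak p} \le (2N+1)^r$ already yields $\le 1 \le p/(2N+1) = O(pN^{-1})$. Either way, the bound $O_{\deg C,\ell}(p^{-1}) + O(pN^{-1})$ holds.

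I do not expect a serious obstacle; the argument is essentially routine counting plus the Schwartz--Zippel/Lang--Weil input. The one subtlety is the case split at the end: the explicit fibre-counting bound degrades once $p$ becomes larger than $N$, but precisely in that regime the trivial estimate suffices, so the two regimes patch together without fuss.
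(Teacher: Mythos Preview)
Your proof is correct and follows essentially the same approach as the paper: both combine a Lang--Weil/Schwartz--Zippel bound on $C(\F_p)$ with control on the fibres of $\n \mapsto \a^\n$ coming from the order hypothesis. The paper packages the fibre count as a pullback $C_\m = [\m]^{-1}C$ together with equidistribution of $\b^\n$ (for primitive roots $b_i$), and absorbs your case split into the $O(pN^{-1})$ error term, but the content is the same.
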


\begin{proof}
    By the assumption that $p$ is sufficiently large, we may assume that the reduction of $C$ modulo $\mathfrak p$ is a proper Zariski-closed subvariety of $G = (\G_m^r)_{\F_p}$.

    Similarly to the 2nd-4th paragraphs of the proof of Lemma~\ref{l_one_prime}, we may replace $C$ by $C_\m = C\times_{[\m]} G$, where $(p-1)/m_i$ is the order of $a_i$ with\ $m_i \leq \ell$, and then
    \[
        \frac{\# Z_{C,N,\mathfrak p}}{(2N+1)^r} = \frac{\#\{ \y\in (\F_p^\times)^r \mid \y\in C_\m(\F_p)\}}{(p-1)^r} + O(\frac pN).
    \]
    By the Lang-Weil estimates, and since $\deg C_{\m}\ll_{\ell} \deg C$, we have $\#\{ \y\in (\F_p^\times)^r \mid \y\in C(\F_p)\}\ll_{\deg C,\ell} p^{r-1}$, so the result follows.
    \end{proof}

\section{Proof of Theorem~\ref{t:main_no_roots}
}
    Let $K$ be a number field with ring of integers  $\ca O_K$, let $\a=(a_1,\ldots, a_r)\in \ca O_K^r$ be such that each $a_i$ is nonzero and not a root of unity. Let $f\in \ca O_K[\t,X]$ be a polynomial satisfying (PB). 
    Let $\ell$ be sufficiently large, so that $\underline{\delta}(\ca P_{f,\ell}(\a)) >0$  (Lemma~\ref{l:large_ell}).
    
    We need to prove that $\displaystyle\lim_{N\to \infty}{\rm Prob}(\n \in \ca N^0_K(f,\a;N)) =1$ (under GRH), where $\n$ is a  random variable taking the values of $([-N,N]\cap \Zint)^r$ uniformly at random and $\ca N^0_K$ is as defined in \eqref{def:noroots}. 

    We let $t, z, x$ be three parameters depending on $N$ satisfying the constraints \eqref{cons_0}, \eqref{cons_1}, \eqref{cons_2}, \eqref{cons_3}, and \eqref{cons_4}, below.
    The first constraint is 
    \begin{equation}
        \label{cons_0}
        \lim_{N\to \infty}t=\lim_{N\to \infty}z=\lim_{N\to \infty}x=\infty.
    \end{equation}
    By Lemma~\ref{l:graph_theory} applied to $\ca P=\mathcal{P}_{f,\ell}(\a)$, there exists $c>0$ depending only on $\ell, f,\a$ such that if
    \begin{equation}\label{cons_1}
        ct\leq z\leq \log x,
    \end{equation}
    then there exist $\mathfrak{p}_1\ldots, \mathfrak{p}_t \in \ca P_{f,\ell}(\a)$ of respective norms $p_1, \ldots, p_t\in  (x,2x]$ such that $(p_i-1,p_j-1)\le z$ for all $i\neq j$. 

    Let $C=\{ g_d(\t)=0\}$ be the zero set of $g_d(\t)$. By Lemma~\ref{l:zariski_closed},  
    \begin{equation}
    \label{eq:prob_gdzero}
        {\rm Prob}\Big(\n \in \bigcup_{i=1}^ t  Z_{C,N,\mathfrak p_i}\Big) = O(tx^{-1} + txN^{-1}) \to 0,
    \end{equation}
    as $N\to \infty$, by \eqref{cons_1} and provided
    \begin{equation}\label{cons_2}
        tx=o(N).
    \end{equation}

    Let $M = \prod_{i=1}^t (p_i-1)< (2x)^t$ and let $\m$ be a uniform random variable on $(\Zint/M\Zint)^r$. Then, the total variation distance between the distribution of $\n\mod M$  from the uniform distribution modulo $M$ is $O((2x)^tN^{-1})$. 
    So if
    \begin{equation}
        \label{cons_3}
        (2x)^t=o(N),
    \end{equation}
    Lemma~\ref{l_many_primes} implies that there exists  $\alpha_{z,t}>0$ depending only on $z,t,f,\a,\ell$ and not on $x$ and $N$ such that
    \begin{equation}\label{eq:ApN}
        {\rm Prob} \Big(\n\mod M \in \bigcap_{i=1}^t\ca A_{\mathfrak p_i,M} \Big) ={\rm Prob} \Big(\m\in \bigcap_{i=1}^t\ca A_{\mathfrak p_i,M} \Big) +o(1)\leq (1-d^{-1})^t + \alpha_{z,t}x^{-1/2}+o(1) \to 0,
    \end{equation}
    provided $z,t$ tend to infinity sufficiently slow so that 
    \begin{equation}
        \label{cons_4}
        \lim_{x\to \infty}\alpha_{z,t}x^{-1/2} = 0.
    \end{equation}

    Now, if $\n\not\in \ca N^0_K(f,\a;N)$ and $\n \not\in \bigcup_{i=1}^ t  Z_{C,N,\mathfrak p_i}$, then $f(\a^\n,X)$ has a root in $K$ and $g_d(\a^\n)\neq 0 \mod \mathfrak p_i$, so $f(\a^n,X)$ has a root modulo $\mathfrak p_i$ for all $i$, i.e.\ $\n\mod  M\in \bigcap_{i=1}^t \ca A_{\mathfrak p_i,M}$. Hence, by \eqref{eq:prob_gdzero} and \eqref{eq:ApN}
     \[
        {\rm  Prob}(\n \not \in \ca N^0_K(f,\a;N)) \leq {\rm Prob}\Big(\n \in \bigcup_{i=1}^ t  Z_{C,N,\mathfrak p_i}\Big) + {\rm Prob}(\n\mod  M\in \bigcap_{i=1}^t \ca A_{\mathfrak p_i,M}) \to 0 
    \]
    as $N\to \infty$. This finishes the proof as it is obvious we can choose $t,z,x$ satisfying \eqref{cons_0}, \eqref{cons_1}, \eqref{cons_2}, \eqref{cons_3}, and \eqref{cons_4}.
\qed

\bibliography{references}

\newcommand{\etalchar}[1]{$^{#1}$}
\begin{thebibliography}{CDJ{\etalchar{+}}22}

\bibitem[BSFP23]{bary2023ramified}
L.~Bary-Soroker, A.~Fehm, and S.~Petersen.
\newblock Ramified covers of abelian varieties over torsion fields.
\newblock {\em Journal f{\"u}r die reine und angewandte Mathematik (Crelles
  Journal)}, (0), 2023.

\bibitem[BSG23]{bary2023garzoni}
L.~Bary-Soroker and D.~Garzoni.
\newblock Hilbert’s irreducibility theorem via random walks.
\newblock {\em International Mathematics Research Notices},
  2023(14):12512--12537, 2023.

\bibitem[CDJ{\etalchar{+}}22]{corvaja2022demeio_lombardo}
P.~Corvaja, J.~L. Demeio, A.~Javanpeykar, D.~Lombardo, and U.~Zannier.
\newblock On the distribution of rational points on ramified covers of abelian
  varieties.
\newblock {\em Compositio Mathematica}, 158(11):2109--2155, 2022.

\bibitem[CZ17]{corvaja2017zannier_hilbert}
P.~Corvaja and U.~Zannier.
\newblock On the {H}ilbert property and the fundamental group of algebraic
  varieties.
\newblock {\em Mathematische Zeitschrift}, 286(1-2):579--602, 2017.

\bibitem[D{\`e}b92]{debes1992irreducibility}
P.~D{\`e}bes.
\newblock On the irreducibility of the polynomials ${P}(t^m, {Y})$.
\newblock {\em Journal of Number Theory}, 42(2):141--157, 1992.

\bibitem[EM99]{erdos_murty}
P.~Erd{\"o}s and M.R. Murty.
\newblock On the order of $a \pmod p$.
\newblock In {\em CRM Proceedings and Lecture Notes}, volume~19, pages 87--97,
  1999.

\bibitem[Ent19]{entin}
A.~Entin.
\newblock Monodromy of hyperplane sections of curves and decomposition
  statistics over finite fields.
\newblock {\em International Mathematics Research Notices},
  2021(14):10409--10441, 07 2019.

\bibitem[FJ23]{friedjarden}
M.D. Fried and M.~Jarden.
\newblock {\em Field arithmetic}, volume~11 of {\em Ergebnisse der Mathematik
  und ihrer Grenzgebiete. 3. Folge. A Series of Modern Surveys in Mathematics
  [Results in Mathematics and Related Areas. 3rd Series. A Series of Modern
  Surveys in Mathematics]}.
\newblock Springer, Cham, [2023] \copyright 2023.
\newblock Fourth edition [of 868860], Revised by Moshe Jarden.

\bibitem[Hoo67]{Hooley}
C.~Hooley.
\newblock On {A}rtin's conjecture.
\newblock {\em J. Reine Angew. Math.}, 225:209--220, 1967.

\bibitem[J{\"a}r21]{jarviniemi2021orders}
O.~J{\"a}rviniemi.
\newblock Orders of algebraic numbers in finite fields.
\newblock {\em arXiv preprint arXiv:2106.09813}, 2021.

\bibitem[Lan13]{lang2013fundamentals}
S.~Lang.
\newblock {\em Fundamentals of Diophantine geometry}.
\newblock Springer Science \& Business Media, 2013.

\bibitem[MV07]{montgomery2007multiplicative}
H.L. Montgomery and R.C. Vaughan.
\newblock {\em Multiplicative Number Theory I: Classical Theory}.
\newblock Cambridge Studies in Advanced Mathematics. Cambridge University
  Press, 2007.

\bibitem[Ser03]{Serre_JTHM}
J.-P. Serre.
\newblock On a theorem of {J}ordan.
\newblock {\em Bulletin of the American Mathematical Society}, 40:429--440,
  2003.

\bibitem[Ser08]{Serre_Topics}
J.-P. Serre.
\newblock {\em Topics in {G}alois theory}, volume~1 of {\em Research Notes in
  Mathematics}.
\newblock A K Peters, Ltd., Wellesley, MA, second edition, 2008.
\newblock With notes by Henri Darmon.

\bibitem[Tur41]{Turan}
P~Tur{\'a}n.
\newblock On an extremal problem in graph theory.
\newblock {\em Matematikai és Fizikai Lapok}, 48:436--452, 1941.

\bibitem[Zan10]{zannier2010}
U.~Zannier.
\newblock Hilbert irreducibility above algebraic groups.
\newblock {\em Duke Mathematical Journal}, 153(2):397--425, 2010.

\end{thebibliography}
\bibliographystyle{alpha}

\end{document}